\newtheorem{thm}{Theorem}[section]
\newtheorem{lem}[thm]{Lemma}
\newtheorem{cor}[thm]{Corollary}
\newtheorem{conj}[thm]{Conjecture}
\newtheorem{claim}{Claim}
\theoremstyle{definition}
\newtheorem{definition}[thm]{Definition}
\theoremstyle{remark}
\newtheorem{remark}[thm]{Remark}
\numberwithin{equation}{section}
\newcommand{\bQ}{\mathbb{Q}}
\newcommand{\bP}{\mathbb{P}}
\newcommand\OO{{\mathcal{O}}}
\newcommand{\rounddown}[1]{\lfloor{#1}\rfloor}
\newcommand\Vol{\text{\rm Vol}}
\newcommand\mult{{\rm{mult}}}
\newcommand\Nklt{{\rm{Nklt}}}
\newcommand\lct{{\rm{lct}}}
\newcommand\ulct{{\rm{ulct}}}
\begin{document}

\title{Boundedness of anti-canonical volumes of singular log Fano threefolds}
\date{November 25, 2014}
\author{Chen Jiang}
\address{Graduate School of Mathematical Sciences, the University of Tokyo,
3-8-1 Komaba, Meguro-ku, Tokyo 153-8914, Japan.}
\email{cjiang@ms.u-tokyo.ac.jp}
 \curraddr{Kavli IPMU (WPI), UTIAS, The University of Tokyo, Kashiwa, Chiba 277-8583, Japan.}
\email{chen.jiang@ipmu.jp}

\thanks{The author was supported by Grant-in-Aid for JSPS Fellows (KAKENHI No. 25-6549) and Program for Leading Graduate  Schools, MEXT, Japan}

\begin{abstract}
We prove the Weak Borisov--Alexeev--Borisov Conjecture in dimension three which states that the anti-canonical volume of an $\epsilon$-klt log Fano pair  of dimension three is bounded from above.
\end{abstract} 

\keywords{log Fano varieties, Mori fiber spaces, volumes, $\alpha$-invariants, boundedness}
\subjclass[2000]{14E30, 14J30, 14J45}
\maketitle
\pagestyle{myheadings} \markboth{\hfill  C. Jiang
\hfill}{\hfill Boundedness of anti-canonical volumes of singular log Fano $3$-folds\hfill}

\tableofcontents

\section{Introduction}
Throughout the article, we work over the field of complex numbers $\mathbb{C}$. We adopt the standard notations and definitions in \cite{KMM} and \cite{KM}, and will freely use them.
\begin{definition}
A {\it pair} $(X, \Delta)$ consists of  a normal projective variety $X$ and an effective
$\mathbb{Q}$-divisor $\Delta$ on $X$ such that
$K_X+\Delta$ is $\mathbb{Q}$-Cartier.   $(X, \Delta)$ is called a
\emph{log Fano pair} (resp. {\it weak log Fano pair}) if $-(K_X+\Delta)$ is ample (resp. nef and big). If $\dim X=2$, we will use {\it del Pezzo} instead of Fano. 
\end{definition}

\begin{definition}
Let $(X, \Delta)$ be a pair. Let $f: Y\rightarrow X$ be a log
resolution of $(X, \Delta)$, write
$$
K_Y =f^*(K_X+\Delta)+\sum a_iF_i,
$$
where $F_i$ are distinct prime divisors. The coefficient $a_i$ is called the {\it discrepancy} of $F_i$ with respect to $(X, \Delta)$, and denoted by $a_{F_i}(X, \Delta)$. For a real number $\epsilon \in [0,1]$, the
pair $(X,\Delta)$ is called
\begin{itemize}
\item[(a)] \emph{$\epsilon$-kawamata log terminal} (\emph{$\epsilon$-klt},
for short) if $a_i> -1+\epsilon$ for all $i$;

\item[(b)] \emph{$\epsilon$-log canonical} (\emph{$\epsilon$-lc}, for
short) if $a_i\geq  -1+\epsilon$ for all $i$;

\item[(c)] \emph{terminal} if  $a_i> 0$ for all $f$-exceptional divisors $F_i$. 
\end{itemize}
Usually we write $X$ instead of $(X,0)$ in the case $\Delta=0$.
\end{definition}
Note that $0$-klt (resp. $0$-lc) is just klt (resp. lc) in the usual sense. 
%Also note that $1$-klt is terminal with empty boundary. 

\begin{definition}
A variety $X$ is \emph{of $\epsilon$-Fano type} if there exists an effective $\bQ$-divisor $\Delta$ such that $(X, \Delta)$ is an $\epsilon$-klt log Fano pair. 
\end{definition}

We are mainly interested in the boundedness of  varieties of $\epsilon$-Fano type. 

\begin{definition}
A collection of varieties $\{X_\lambda\}_{\lambda\in \Lambda}$ is
said to be \emph{bounded} if there exists $h:\mathcal{X}\rightarrow
S$ a projective morphism between schemes  of finite type such that for
each $X_\lambda$, $X_\lambda\simeq \mathcal{X}_s$ for some $s\in S$.
\end{definition}

Our motivation  is the following BAB
Conjecture due to A. Borisov, L. Borisov, and V. Alexeev.

\begin{conj}[BAB Conjecture]
Fix $0<\epsilon<1$ and an integer $n>0$. 
Then the set of all
$n$-dimensional  varieties of $\epsilon$-Fano type is bounded.
\end{conj}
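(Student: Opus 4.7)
The plan is to split the conjecture into three quantitative assertions and then package the resulting varieties into a bounded family via standard Hilbert-scheme machinery. Specifically I would aim to prove, for each $n$ and $\epsilon$ and every $n$-dimensional $(X,\Delta)$ of $\epsilon$-Fano type: (i) an upper bound $\Vol(-(K_X+\Delta))\le V(n,\epsilon)$; (ii) effective birationality --- the existence of $m=m(n,\epsilon)$ such that $|{-}m(K_X+\Delta)|$ defines a birational map; and (iii) a bound $N=N(n,\epsilon)$ on the Cartier index of $K_X+\Delta$ on a suitable small modification. Granted (i)--(iii), a Matsusaka-type theorem makes some anticanonical multiple very ample with a bounded Hilbert polynomial, and the resulting embedded pairs fall into finitely many algebraic families, which is exactly the conclusion sought.

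First I would reduce to favourable models. By taking a small $\bQ$-factorial modification and running a $(K_X+\Delta)$-MMP I may assume either that $X$ is $\bQ$-factorial with Picard number one, or that $X$ carries a Mori fiber space structure onto a lower-dimensional base; the canonical bundle formula together with ACC for log canonical thresholds then controls the singularities of the base and permits induction on dimension, reducing higher-dimensional BAB to the absolute case and to BAB in strictly lower dimension. After a perturbation I can also assume that the coefficients of $\Delta$ lie in a fixed DCC set, which is what makes the theory of complements applicable.

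The core of the proof is step (ii). I would adapt the non-klt-centre strategy of Angehrn--Siu and Koll\'ar to log Fano pairs: for a general $x\in X$, construct an effective $D\sim_{\bQ}-\lambda(K_X+\Delta)$ with $\lambda$ controlled by $\Vol(-(K_X+\Delta))$ and having a non-klt centre at $x$; tie-break by perturbing with an auxiliary boundary to produce an isolated non-klt centre at $x$; and invoke Nadel vanishing on the corresponding multiplier ideal to lift sections that separate $x$ from a general point. The decisive quantitative input is \emph{boundedness of $N$-complements}: there should exist $N=N(n,\epsilon)$ and a $\bQ$-divisor $\Delta^+\ge\Delta$ with $N(K_X+\Delta^+)\sim 0$ and $(X,\Delta^+)$ still $\epsilon$-lc. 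This input rests on ACC for lc thresholds on varieties of $\epsilon$-Fano type and on uniform Kawamata--Viehweg-type vanishing statements; as a by-product it also delivers (iii). The volume bound (i) is more local: one intersects general elements of $|{-}m(K_X+\Delta)|$ at a fixed point and uses the $\epsilon$-klt condition, together with an inductive hypothesis on lower-dimensional anticanonical volumes, to bound multiplicities --- in dimension three this is precisely the content of the Weak BAB statement the paper is designed to prove.

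The principal obstacle is effective birationality in step (ii). The volume bound (i) and the Cartier-index bound (iii) can be organised by dimension-theoretic, local, and inductive arguments; but transforming a volume estimate into a uniform birationality statement demands a global input of a different flavour, essentially equivalent to boundedness of $N$-complements for $\epsilon$-lc Fano pairs. Establishing this uniformly in arbitrary dimension $n$ is the central difficulty, and is what prevents the methods of this paper --- which are sharp enough to produce the volume bound in dimension three --- from directly delivering full BAB in the generality stated.
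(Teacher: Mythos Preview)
The statement you are attempting to prove is not proved in the paper at all: it is \emph{stated} there as the BAB Conjecture, explicitly labelled an open problem (``In dimension three or higher, BAB Conjecture is still open''). The paper's actual main result is Theorem~\ref{BAB3main}, the \emph{Weak} BAB Conjecture in dimension three, i.e.\ only the anti-canonical volume bound $\Vol(-K_X)\le M(3,\epsilon)$, with no claim to boundedness of the family. So there is no ``paper's own proof'' to compare your proposal against.

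Your outline is not wrong as a research programme --- reduction to Mori fibre spaces, bounded complements, effective birationality, then Matsusaka-type finiteness is indeed the architecture later used to resolve BAB --- and you correctly identify that step~(ii), effective birationality via boundedness of $N$-complements, is the decisive gap. But precisely because that gap is real and unfilled in the paper, what you have written is a sketch of a strategy rather than a proof. The paper itself contributes only to your step~(i) in dimension three (via the Mori-fibre-space reduction of Theorem~\ref{main thm mfs} and the case analysis of Theorem~\ref{mfs3}), and makes no attempt at (ii) or (iii). If the assignment was to supply a proof of the displayed statement, the honest answer is that the paper does not contain one and your proposal does not close the gap either; you should instead be addressing Theorem~\ref{BAB3main}.
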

The BAB Conjecture is one of the most important conjectures in birational geometry and it is related to the termination of flips. The following weaker conjecture is an important step towards the proof of the BAB Conjecture.
\begin{conj}[Weak BAB Conjecture]\label{conj2}
Fix $0<\epsilon<1$ and an integer $n>0$. 
Then there exists a  number $M(n,\epsilon)$ depending only
on $n$ and $\epsilon$ with the following property:
if  $X$ is an 
$n$-dimensional variety of $\epsilon$-Fano type, then 
$${\rm Vol}(-K_X) \leq M(n,\epsilon).$$
\end{conj}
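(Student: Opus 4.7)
\emph{Plan.} I will prove the $n=3$ case stated in the abstract. The strategy is to reduce via the minimal model program to a Mori fibre space and then case-analyse on the dimension of the base, inducting from known boundedness results in lower dimension (Alexeev for klt del Pezzo surfaces; boundedness of $\epsilon$-klt $\QQ$-Fano threefolds of Picard number one via global log canonical thresholds).

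I would first reduce to a Mori fibre space. Given a variety $X$ of $\epsilon$-Fano type with boundary $\Delta$ making $(X,\Delta)$ $\epsilon$-klt log Fano, $X$ is of Fano type and hence a Mori dream space, so after a small $\QQ$-factorialization I may assume $X$ is $\QQ$-factorial. Since $-K_X$ is big, $K_X$ is not pseudo-effective and a suitable MMP terminates in a Mori fibre space $\pi\colon X'\to Z$ with $0\le\dim Z\le 2$. A short discrepancy computation shows divisorial contractions can only increase $\Vol(-K_\bullet)$ while small modifications preserve it, so $\Vol(-K_X)\le\Vol(-K_{X'})$, and $X'$ may be arranged to carry a boundary $\Delta'$ making $(X',\Delta')$ $\epsilon'$-klt log Fano for some $\epsilon'=\epsilon'(\epsilon)>0$. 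It thus suffices to bound $\Vol(-K_{X'})$.

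Next I would case-analyse on $\dim Z$. If $\dim Z=0$, then $X'$ is a $\QQ$-Fano threefold of Picard number one, and one combines a Tian-type inequality
\[
\glct(X',\Delta')^{3}\cdot\Vol(-(K_{X'}+\Delta'))\le 3^{3}
\]
with a positive lower bound $\glct(X',\Delta')\ge\alpha_0(\epsilon)$ (established separately using ACC for log canonical thresholds together with the $\epsilon$-klt hypothesis) to bound $\Vol(-(K_{X'}+\Delta'))$, and hence $\Vol(-K_{X'})$. If $\dim Z=1$, a general fibre $F$ of $\pi$ is an $\epsilon'$-klt weak del Pezzo surface, so $(-K_F)^2\le M_2(\epsilon)$ by Alexeev; combined with the adjunction $K_{X'}|_F=K_F$ and a bound on the degree of a suitable horizontal divisor, this yields a bound on $(-K_{X'})^{3}$. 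If $\dim Z=2$, then $\pi$ is a conic bundle; the Ambro--Fujino canonical bundle formula endows $Z$ with an $\epsilon''$-klt log del Pezzo structure, Alexeev bounds $(-K_Z)^{2}$, and a careful analysis of the discriminant of $\pi$, fed by the $\epsilon$-klt hypothesis on $(X',\Delta')$, controls the remaining contributions to $(-K_{X'})^{3}$.

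The main obstacle I anticipate is the conic bundle case $\dim Z=2$: one must show that the klt constant on the base depends only on $\epsilon$, which requires an effective canonical bundle formula with controlled moduli b-divisor, and then bound the discriminant locus of $\pi$ in terms of $\epsilon$ via multiplicity estimates along vertical divisors. The case $\dim Z=0$ is also substantive, since it effectively reduces to a Weak-BAB-type lower bound on $\glct$ for all $\epsilon$-klt $\QQ$-Fano threefolds of Picard number one, which itself requires separate technical input combining ACC-type theorems, Alexeev's surface result, and explicit bounds on singularities of extremal extractions.
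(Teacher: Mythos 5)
Your high-level strategy (reduce by MMP to a Mori fibre space, then case-analyse on the base dimension) is the same as the paper's, but there are two substantive gaps in the execution.

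First, you only take a small $\bQ$-factorialization before running the MMP, and you do not terminalize. The paper's reduction (Theorem~\ref{main thm mfs}) first $\bQ$-factorializes, \emph{then terminalizes} $(X,\Delta)$, and only then runs a $K$-MMP, so that the resulting Mori fibre space $X'\to Z$ is $\bQ$-factorial with \emph{terminal} singularities. This is not cosmetic: in the case $\dim Z=0$, the paper simply invokes Kawamata's boundedness of $\bQ$-factorial terminal $\bQ$-Fano threefolds of Picard number one, which requires terminality, not merely $\epsilon$-klt-ness. Your proposed route there --- a Tian-type inequality $\glct^3\cdot\Vol\le 3^3$ combined with a uniform lower bound on $\glct$ over all $\epsilon$-klt $\bQ$-Fano threefolds of $\rho=1$ --- is substantially harder than what is needed (it is essentially a dimension-three case of Ambro's conjecture) and you do not indicate how to prove it. After terminalization the problem disappears. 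Terminality of the general fibre is also used in the $\dim Z=1$ case.

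Second, and more seriously, your treatment of the del Pezzo fibration case $\dim Z=1$ omits the key technical ingredient. Bounding $(-K_F)^2$ by Alexeev and invoking adjunction gives nothing by itself; the mechanism in the paper is to show that if $\Vol(-K_{X'})$ is large one can find, for two general fibres $F_1,F_2$, a divisor $B'\sim_\bQ -\tfrac{2}{s}K_{X'}$ with $F_1\cup F_2\subset\Nklt(X',(1-\tfrac{2}{s})\Delta'+B')$; Connectedness then produces a horizontal non-klt centre, and restricting to a general fibre $F$ gives a \emph{non-effective} $\bQ$-divisor $G$ with $G+B|_F\ge 0$, $G\sim_\bQ -(K_F+B|_F)$, and $(F,B|_F+\tfrac{2}{s}G)$ not klt. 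What one must bound from below is the \emph{generalized} log canonical threshold $\glct(F,B|_F;G)$ uniformly over all $\epsilon$-klt weak log del Pezzo pairs; this is precisely Conjecture~\ref{gac} in dimension two, whose proof (Theorem~\ref{gac2}, Section~\ref{section ambro}) occupies a substantial portion of the paper and does not follow from Alexeev's boundedness or from two-dimensional BAB. Your phrase ``a bound on the degree of a suitable horizontal divisor'' is standing in for this entire missing lemma. Without it the $\dim Z=1$ case does not close.

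On the conic bundle case $\dim Z=2$ your proposal is a genuinely different route: you suggest an effective canonical bundle formula with controlled moduli b-divisor plus a discriminant analysis, whereas the paper avoids the canonical bundle formula entirely and instead cites structural results on the base surface (Mori--Prokhorov for type-$A$ Du Val singularities, Fujino--Gongyo and Birkar for a $\delta(\epsilon)$-klt log del Pezzo structure, Alexeev--Mori for $N(\epsilon)$-factoriality), then constructs a modified non-klt-producing boundary whose vertical components have multiplicity at most $\epsilon/2$ via Koll\'ar's effective base point freeness on the base. You correctly flag this as the main obstacle, but you should be aware that the effective canonical bundle formula with the uniformity you need is itself a nontrivial open-ended input, whereas the cited surface results are available off the shelf.
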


The BAB Conjecture was proved in dimension two by Alexeev \cite{AK2} with a simplified argument by Alexeev and Mori \cite{AM}. In dimension three or higher, the BAB Conjecture is still open.\footnote{After this paper was posted on arXiv in 2014, Birkar \cite{Bir16a, Bir16b} gave a proof of the BAB Conjecture (along with the Weak BAB Conjecture and the Ambro's conjecture)  in arbitrary dimension in 2016 by different and much stronger methods.} There are only  some partial boundedness results. For example,  we have boundedness of   smooth Fano manifolds by Koll\'ar, Miyaoka, and Mori \cite{KMM92}, 
 of terminal $\mathbb{Q}$-Fano $\mathbb{Q}$-factorial threefolds of Picard number one
by Kawamata \cite{K},  of canonical $\mathbb{Q}$-Fano threefolds by  Koll\'ar, Miyaoka, Mori, and Takagi \cite{KMMT}, and of toric varieties by Borisov and Borisov \cite{BB}.

The Weak BAB Conjecture in dimension two was treated by Alexeev \cite{AK2}, Alexeev and Mori \cite{AM}, and Lai \cite{Lai}. Recently, the author \cite{J1} gave an optimal value for the number $M(2,\epsilon)$ (see also Corollary \ref{2bab}). For the Weak BAB Conjecture in dimension three assuming that the Picard number of $X$ is one,  an effective value of $M(3, \epsilon)$ was given by Lai \cite{Lai}. For the general case in dimension three and higher, the Weak BAB Conjecture is still open.

As the main theorem of this paper, we prove the Weak BAB Conjecture in dimension three.

\begin{thm}\label{BAB3main}
The Weak BAB Conjecture holds for $n=3$.
\end{thm}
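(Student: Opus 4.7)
My plan is to reduce to a Mori fiber space by running the Minimal Model Program, and then to analyze three cases according to the dimension of the base, in each case leveraging the two-dimensional Weak BAB (Corollary \ref{2bab}). Concretely, I first replace $(X,\Delta)$ by a small $\mathbb{Q}$-factorialization and run a $K_X$-MMP. Since $-K_X=-(K_X+\Delta)+\Delta$ is big, $K_X$ is not pseudo-effective, so the program terminates with a Mori fiber space $\pi\colon X'\to Z$. Every divisorial contraction $\phi\colon Y\to Y'$ in the MMP satisfies $K_Y=\phi^*K_{Y'}+aE$ with $a>0$, hence
\[
\Vol(-K_Y)=\Vol\bigl(\phi^*(-K_{Y'})-aE\bigr)\le\Vol(-K_{Y'}),
\]
while flips preserve volume. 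The $\epsilon$-Fano type property is preserved on every model (possibly with $\epsilon$ replaced by some $\epsilon'=\epsilon'(\epsilon)>0$), so it suffices to bound $\Vol(-K_{X'})$ in each of the three cases $\dim Z\in\{0,1,2\}$.

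The three cases are then treated separately. When $\dim Z=0$, $X'$ is a $\mathbb{Q}$-factorial $\epsilon'$-klt $\mathbb{Q}$-Fano threefold of Picard number one, and I appeal to, and if necessary effectivise, the bound announced in Lai \cite{Lai}. When $\dim Z=2$, $\pi$ is a conic bundle; a canonical bundle formula renders the base $Z$ a surface of $\epsilon''$-Fano type for some $\epsilon''=\epsilon''(\epsilon)>0$, so by Corollary \ref{2bab} the volume $\Vol(-K_Z)$ is bounded, and $(-K_{X'})^3$ is then controlled via the relative degree of $\pi$ and its discriminant locus. When $\dim Z=1$, $Z\cong\PPP^1$ and a general fiber $F$ is an $\epsilon''$-klt del Pezzo surface of bounded $K_F^2$; decomposing
\[
(-K_{X'})^3=(-K_{X'/\PPP^1})^3+6K_F^2
\]
reduces the task to an upper bound for the relative positivity $(-K_{X'/\PPP^1})^3$.

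The main obstacle, in both fibration cases, is the uniform control of this relative positivity, which encodes how twisted the fibration is. My principal tool will be a uniform lower bound for the global log canonical threshold, $\glct(X')\ge c(\epsilon)>0$ on $\epsilon$-Fano type varieties, used to construct an auxiliary $\mathbb{Q}$-divisor compatible with $\pi$ that transfers positivity from the vertical to the horizontal direction. Combining this with Corollary \ref{2bab} applied on the base, with the boundedness of the general fiber, and with the Picard number one case, one obtains a uniform bound $\Vol(-K_{X'})\le M(3,\epsilon)$, which, together with $\Vol(-K_X)\le\Vol(-K_{X'})$, completes the proof.
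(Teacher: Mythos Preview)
Your reduction to Mori fiber spaces and the three-case split mirror the paper's structure, but the core of your argument contains a genuine gap. Your ``principal tool'' is a uniform lower bound $\glct(X')\ge c(\epsilon)>0$ for the threefold $X'$ itself. This is precisely Ambro's conjecture in dimension three, which is \emph{not} known and is at least as hard as the Weak BAB statement you are trying to prove. The paper never uses a three-dimensional $\glct$ bound; instead, its key technical input is Theorem~\ref{gac2}, the generalized Ambro conjecture in dimension \emph{two}, proved from scratch in Section~\ref{section ambro}. In the del Pezzo fibration case the paper constructs, via a counting argument and Connectedness Lemma, a non-klt center dominating $\bP^1$, restricts to a general fiber $F$, and only then applies the two-dimensional $\glct$ bound on $(F,B|_F)$ to bound the parameter $s$ controlling $\Vol(-K_{X'})$. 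Your decomposition $(-K_{X'})^3=(-K_{X'/\bP^1})^3+6K_F^2$ is algebraically correct but does not help: $-K_{X'/\bP^1}$ need not be nef, and bounding its cube from above is essentially the same difficulty repackaged.

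Your treatment of the conic bundle case is also too optimistic. Saying the volume is ``controlled via the relative degree of $\pi$ and its discriminant locus'' is not a proof: there is no direct formula relating $(-K_{X'})^3$ to $\Vol(-K_S)$ and the discriminant that gives a bound. The paper's argument here (Section~6.2) is substantially more delicate: it uses that $S$ is a Mori dream space with bounded Cartier index (Theorem~\ref{thmS}), constructs divisors $B_P\sim_\bQ -\frac{a_P}{t}K_X$ with carefully controlled multiplicities along vertical divisors via Koll\'ar's Effective Base Point Free Theorem (Lemma~\ref{lemma vertical}), and then runs a Connectedness Lemma argument with case analysis on the dimension of the resulting non-klt center. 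Finally, for $\dim Z=0$ the paper invokes Kawamata \cite{K} (boundedness of $\bQ$-factorial terminal $\bQ$-Fano threefolds with $\rho=1$), not Lai; since the MFS output is terminal, this suffices and avoids relying on an announced result.
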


%As a consequence, we get a  different proof of a result on the boundedness
%of log Fano varieties of fixed index in dimension three which was conjectured by Batyrev, and proved by A. Borisov \cite{Bor} in dimension three and Hacon--M\textsuperscript{c}Kernan--Xu \cite[Corollary 1.8]{HMX} in arbitrary dimension.
%\begin{cor}\label{baty}
%Fix a positive integer $r$.

%Let $\mathcal{D}$ be the set of all normal projective varieties $X$, where
%$\dim X=3$, $K_X$ is $\bQ$-Cartier, and there exists an effective $\bQ$-divisor $\Delta$ such that $(X,\Delta)$ is klt and $-r(K_X + \Delta)$ is  
%Cartier and ample. 

%Then $\mathcal{D}$  forms a bounded family.
%\end{cor}

{\it Acknowledgments.} The author would like to express his  
gratitude to his supervisor Professor Yujiro Kawamata, for  
suggestions, discussions, encouragement, and support.  
The author is indebted to Professors Florin Ambro, Caucher Birkar, Yoshinori Gongyo, Dr. Yusuke Nakamura,  and Mr. Pu Cao for effective conversations. 
A part of this paper  was done during the author's  visit to  University of Cambridge in  2013 and he would like to thank Professors   Caucher Birkar and Yifei Chen for the hospitality.

\section{Description of the proof}
Firstly, we give an approach to the Weak BAB Conjecture via Mori fiber spaces. 

\begin{definition}
A projective morphism $X\rightarrow T$  between normal varieties is called a \emph{Mori fiber space} if the following conditions hold:
\begin{itemize}
\item[(i)] $X$ is $\mathbb{Q}$-factorial with terminal singularities;

\item[(ii)] $f$ is a {\it contraction}, i.e., $f_*\OO_X=\OO_T$;

\item[(iii)] $-K_X$ is ample over $T$;

\item[(iv)] $\rho(X/T)=1$;

\item[(v)] $\dim X > \dim T$. 
\end{itemize}
In this case, we say that $X$ is endowed with a \emph{Mori fiber structure}. 
\end{definition}

We raise the following conjecture for Mori fiber spaces. 
\begin{conj}[Weak BAB Conjecture for Mori fiber spaces]
Fix $0<\epsilon<1$ and an integer $n>0$. 
Then there exists a  number $M(n,\epsilon)$ depending only
on $n$ and $\epsilon$ with the following property: if  $X$ is an
$n$-dimensional  variety  of $\epsilon$-Fano type with a Mori fiber structure, then 
$${\rm Vol}(-K_X) \leq M(n,\epsilon).$$ 
\end{conj}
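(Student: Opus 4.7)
The plan is to induct on $\dim T$, the dimension of the base of the Mori fiber space $f\colon X\to T$. For $n=3$, the case of main interest, this splits into three subcases $\dim T\in\{0,1,2\}$, which I would handle separately.

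When $\dim T=0$, $X$ is a $\mathbb Q$-factorial terminal Fano threefold with $\rho(X)=1$, so boundedness of $\Vol(-K_X)$ is immediate from Kawamata's theorem \cite{K} on boundedness of such varieties, without even using the $\epsilon$-klt hypothesis. When $\dim T=1$ (del Pezzo fibration over a curve) or $\dim T=2$ (conic bundle over a surface), the general fiber $F$ has $\rho(F)=1$ thanks to $\rho(X/T)=1$, and after restricting a suitable boundary $F$ inherits the structure of an $\epsilon$-klt log Fano pair. Hence by the already-established lower-dimensional Weak BAB (Alexeev, Alexeev--Mori, and Corollary \ref{2bab}), $\Vol(-K_F)$ is bounded in terms of $\epsilon$.

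The central step is then to upgrade this fiberwise bound to a bound on the total anti-canonical volume. For this I would apply a canonical bundle formula: write $-(K_X+\Delta)\sim_{\mathbb Q} f^*L$ for a big and nef $\mathbb Q$-divisor $L$ on $T$, so that $(-K_X)^n$ expands, up to lower-order effective error terms, as $(-K_F)^{n-\dim T}\cdot (L^{\dim T})$. The first factor is bounded by the previous paragraph, and the remaining task is to bound $L^{\dim T}$ on $T$. For $\dim T=1$ this reduces to bounding a single degree on a curve, which can be controlled via a direct argument using the $\epsilon$-klt condition on a fiber meeting the singular locus. For $\dim T=2$, one invokes the canonical bundle formula of Ambro (or Kawamata) to write $K_T+B_T+M_T\sim_{\mathbb Q}f_*(K_X+\Delta)$ with $(T,B_T)$ klt, and then bounds $L^2=(K_T+B_T+M_T)^2$ via the two-dimensional Weak BAB applied to a suitable pair on $T$, provided one can show that $(T,B_T)$ (together with a quantitative control on $M_T$) falls within the $\epsilon'$-klt regime for some $\epsilon'=\epsilon'(\epsilon)>0$.

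The main obstacle is the conic-bundle subcase $\dim T=2$. The singularities of the base surface $T$ must be controlled quantitatively in terms of $\epsilon$, and the moduli $b$-divisor $M_T$ arising from the canonical bundle formula must be estimated effectively from above; the naive application of Ambro's theorem yields only klt on the base, not $\epsilon'$-klt with an explicit $\epsilon'$. An attractive alternative is to establish a uniform positive lower bound for the global log canonical threshold $\alpha(X)\geq\alpha_0(\epsilon)$, which would immediately give $\Vol(-K_X)\leq n^n/\alpha_0^n$ by a standard argument using sections of $-mK_X$; but producing such a uniform $\alpha_0$ itself appears to require the same Mori fiber space reduction and base-singularity analysis, so the real work lies in the conic-bundle case regardless of the route one takes.
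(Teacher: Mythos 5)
The decomposition into $\dim T\in\{0,1,2\}$ matches the paper, and the $\dim T=0$ case is handled identically, but the core of your proposal — the canonical bundle formula route — has a fundamental obstruction and misses the paper's actual mechanism.

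The relation $-(K_X+\Delta)\sim_{\mathbb Q}f^*L$ cannot hold for a log Fano pair with a Mori fiber structure: $-(K_X+\Delta)$ is ample, so its restriction to a general fiber $F$ is ample, whereas $f^*L|_F\equiv 0$. Consequently the proposed expansion of $(-K_X)^n$ as $(-K_F)^{n-\dim T}\cdot L^{\dim T}$ plus lower-order terms is not available. You can apply a canonical bundle formula only after replacing $\Delta$ by a different boundary $\Delta'$ with $K_X+\Delta'\sim_{\mathbb Q,f}0$, but then $L$ on the base reflects $\Delta'$, not $-K_X$, and the volume of $-K_X$ does not factor this way. Your own observation that Ambro's theorem gives only klt, not $\epsilon'$-klt, on the base is indeed a real gap, and the passage to a uniform $\alpha$-invariant lower bound for $X$ itself is, as you note, circular; so the route is both set up on a false identity and blocked at its quantitative step.

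What the paper does instead is to build non-klt pairs along fibers and propagate them horizontally. Fix $s$ just below $\Vol(-K_X)/(n\cdot M(n-1,\epsilon))$ (del Pezzo fibration case) or $t$ just below $\sqrt{\Vol(-K_X)/24}$ (conic bundle case). A cohomology count produces an effective $B_F\sim_{\mathbb Q}-\frac{1}{s}K_X$ (resp. $\Delta_F\sim_{\mathbb Q}-\frac1tK_X$) making a general fiber $F$ a non-klt center. Taking two general fibers $F_1,F_2$ and applying the Koll\'ar--Shokurov Connectedness Lemma to a pair of the form $(X,(1-\ast)\Delta+B_{F_1}+B_{F_2})$ forces a non-klt center $W$ joining $F_1$ to $F_2$, hence horizontal. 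Restricting to a general fiber and using intersection numbers then bounds $s$ (resp. $t$). In the del Pezzo fibration case, the final step needs a uniform lower bound for the generalized log canonical threshold on the $\epsilon$-klt log del Pezzo fiber, and this is precisely the new input of the paper — the generalized Ambro's conjecture in dimension two (Theorem~\ref{gac2}) — which your proposal does not supply. In the conic bundle case, one must also control vertical components (Lemma~\ref{lemma vertical}), which requires the quantitative control on the base surface $S$ from Theorem~\ref{thmS}: $N(\epsilon)$-factoriality, $\delta(\epsilon)$-klt complements, boundedness — these come from \cite{MP}, \cite{FG}, \cite{Birkar}, and \cite{AM}, not from a direct volume comparison on the base. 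So while you correctly locate the conic bundle case as delicate, both the machinery used there and the (nontrivial) lower-dimensional Ambro-type input in the del Pezzo fibration case are absent from the proposal, and the volume-factorization framing you propose does not get off the ground.
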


This is just a special case of the Weak BAB Conjecture. But we can prove the following theorem by using Minimal Model Program.
\begin{thm}\label{main thm mfs}
The Weak BAB Conjecture holds for fixed $\epsilon$ and $n$ if and only if the Weak BAB Conjecture for Mori fiber spaces holds for fixed $\epsilon$ an $n$.
\end{thm}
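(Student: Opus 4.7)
\emph{Plan.} The ``only if'' direction is immediate: any $n$-dimensional variety of $\epsilon$-Fano type that admits a Mori fiber structure is in particular of $\epsilon$-Fano type, so Weak BAB applied to it yields the Mori fiber space version. For the converse, given an $\epsilon$-klt log Fano pair $(X, \Delta)$ of dimension $n$, my plan is to produce an $n$-dimensional Mori fiber space $g: Z \to T$ with $Z$ of $\epsilon$-Fano type such that $\Vol(-K_X) \leq \Vol(-K_Z)$; applying the MFS hypothesis to $Z$ then yields the desired bound.

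\emph{Reductions and MMP.} A small $\mathbb{Q}$-factorialization (available by BCHM since $X$ is Fano-type klt) lets me assume $X$ is $\mathbb{Q}$-factorial without altering $-K_X$ or $\Delta$. I next take a $\mathbb{Q}$-factorial terminalization $f: Y \to X$ of the underlying variety $X$, which extracts exactly the exceptional divisors $E_i$ with $a(E_i, X) \leq 0$; the formula $K_Y = f^*K_X + \sum a(E_i, X) E_i$ gives $-K_Y \geq f^*(-K_X)$, hence $\Vol(-K_Y) \geq \Vol(-K_X)$. Defining $\Delta_Y$ by the crepant pullback $K_Y + \Delta_Y = f^*(K_X + \Delta)$, the pair $(Y, \Delta_Y)$ inherits $\epsilon$-kltness, and $-(K_Y + \Delta_Y) = f^*(-(K_X+\Delta))$ is nef and big. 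Since $-K_Y = -(K_Y + \Delta_Y) + \Delta_Y$ is big, $K_Y$ is not pseudo-effective, so a $K_Y$-MMP terminates in a Mori fiber space $g: Z \to T$; in dimension three, $\mathbb{Q}$-factorial terminal singularities persist along $K$-MMPs, so $Z$ is $\mathbb{Q}$-factorial terminal. Because $-(K+\Delta)$ is nef, every $K$-negative extremal ray $R$ satisfies $(K+\Delta)\cdot R \leq 0$, so the $K$-MMP is also a $(K+\Delta)$-MMP: the pair $(\cdot, \Delta_\cdot)$ remains $\epsilon$-klt and $-(K_\cdot + \Delta_\cdot)$ remains nef and big throughout. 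Moreover, $\Vol(-K_\cdot)$ is non-decreasing along the MMP --- for a divisorial contraction $\pi: V \to V'$ of a $K$-negative ray with exceptional $E$, $K_V = \pi^* K_{V'} + aE$ with $a > 0$, so $\pi^*(-K_{V'}) \geq -K_V$ and hence $\Vol(-K_{V'}) \geq \Vol(-K_V)$; a common-resolution and negativity-lemma argument handles flips --- so in total $\Vol(-K_X) \leq \Vol(-K_Y) \leq \Vol(-K_Z)$.

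\emph{$Z$ is of $\epsilon$-Fano type, and conclusion.} It remains to show $Z$ admits a boundary $\Delta_Z'$ with $(Z, \Delta_Z')$ $\epsilon$-klt log Fano. If $\dim T = 0$ then $-K_Z$ is ample, so $(Z, 0)$ does the job by terminality of $Z$. If $\dim T > 0$, Kodaira's lemma writes $-(K_Z + \Delta_Z) = A + E'$ with $A$ ample and $E'$ effective, so that $-(K_Z + \Delta_Z + \delta E') = (1-\delta)(-(K_Z+\Delta_Z)) + \delta A$ is nef plus ample, hence ample, for each $\delta > 0$; taking $\delta$ small yields a log Fano pair $(Z, \Delta_Z + \delta E')$. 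I expect the main obstacle to be guaranteeing that this perturbation preserves $\epsilon$-kltness uniformly against all valuations: one must carefully exploit the strict $\epsilon$-inequalities on $(Z, \Delta_Z)$ together with the terminality of $Z$, possibly via a case analysis by the MFS type ($\dim T = 1$ del Pezzo fibration, $\dim T = 2$ conic bundle) in dimension three. Once established, the MFS hypothesis applied to $(Z, \Delta_Z')$ gives $\Vol(-K_Z) \leq M(n, \epsilon)$, and hence $\Vol(-K_X) \leq M(n, \epsilon)$, as required.
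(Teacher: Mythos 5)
Your outline matches the paper's: $\mathbb{Q}$-factorialize, terminalize, run a $K$-MMP down to a Mori fiber space, show $\Vol(-K_\cdot)$ is non-decreasing along the way, and finally show the end product is of $\epsilon$-Fano type so the MFS hypothesis applies. The volume-monotonicity argument (common resolution plus negativity) and the terminalization step are essentially identical to the paper's Lemma~\ref{vol lem} and setup. However, there is a genuine gap in the middle of your ``if'' direction, precisely where you deviate from the paper's route.

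You assert that because $-(K+\Delta)$ is nef, the $K$-MMP is simultaneously a $(K+\Delta)$-non-positive MMP and that therefore ``$-(K_\cdot+\Delta_\cdot)$ remains nef and big throughout.'' The $\epsilon$-klt part of this claim is fine (discrepancies do not decrease under a $(K+\Delta)$-non-positive step), but the nefness part fails as soon as the $K$-MMP performs a $(K+\Delta)$-negative \emph{flip}: by definition, the flipped curve $C^+$ satisfies $(K^+ + \Delta^+)\cdot C^+ > 0$, so $-(K^+ + \Delta^+)$ is \emph{not} nef on the flipped model. Since in dimension $\geq 3$ a $K$-MMP will in general contain flips, and you cannot in general avoid the $(K+\Delta)$-negative ones, your claim that the transported pair stays weak log Fano does not hold. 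This then breaks your concluding step: the Kodaira-lemma argument for the Mori fiber space $Z$ presupposes that $-(K_Z+\Delta_Z)$ is nef and big, which you have not established. (Your worry about whether the Kodaira perturbation preserves $\epsilon$-kltness is, by contrast, not the real obstruction --- for a klt pair the total discrepancy is attained as a minimum, e.g.\ on a terminalization, so being $\epsilon$-klt is genuinely an open condition --- but it is moot here because the earlier step already fails.)

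The paper sidesteps this entirely by \emph{not} transporting the original boundary $\Delta$ along the MMP. Instead, it first upgrades $-(K_{X_1}+\Delta_{X_1})$ from nef-and-big to ample via Kodaira's lemma so that $X_1$ is of $\epsilon$-Fano type, and then invokes Lemma~\ref{mmp lem} (after Gongyo--Okawa--Sannai--Takagi), which shows that the \emph{property} of being of $\epsilon$-Fano type is preserved by every step of the MMP. The proof of that lemma replaces the given boundary by an auxiliary one of the form $\Delta + \delta f^*H + A$ with $K + \Delta + \delta f^*H + A \sim_{\mathbb{Q}} 0$: because the augmented pair is log Calabi--Yau, its discrepancies are invariant under \emph{any} birational contraction (in particular through flips, applied on both sides of the small contraction), and afterwards one drops the $\delta H$-part to recover ampleness. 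This is exactly the mechanism that handles flips and that your argument is missing. To fix your proof, you should stop tracking nefness of $-(K+\Delta)$ along the MMP and instead carry the ``$\epsilon$-Fano type'' property using an auxiliary Calabi--Yau boundary as in Lemma~\ref{mmp lem}.
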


By Theorem \ref{main thm mfs}, to bound the anti-canonical volumes of varieties of  Fano type, we only need to consider the ones with better singularities ($\bQ$-factorial terminal singularities) and with additional structures (Mori fiber structures). This is the advantage of this theorem. In dimension two, this  theorem appears as a crucial step to get the optimal value of $M(2,\epsilon)$ (c.f. \cite{J1}).

Restricting our interest to dimension three, we prove the following theorem.
\begin{thm}\label{mfs3}
The Weak BAB Conjecture for Mori fiber spaces holds for $n=3$.
\end{thm}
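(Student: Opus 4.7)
The plan is to split into three cases according to the dimension of the base $T$ of the Mori fiber space $f\colon X\to T$; since $\dim X=3$, we have $\dim T\in\{0,1,2\}$. When $\dim T=0$, $X$ is itself a $\mathbb{Q}$-factorial terminal Fano threefold with $\rho(X)=1$, and Kawamata's boundedness theorem \cite{K} directly supplies an upper bound for $(-K_X)^3$.

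Suppose $\dim T=1$. Since any variety of $\epsilon$-Fano type is rationally connected, so is $T$, forcing $T\cong\mathbb{P}^1$. A general fiber $F$ is a smooth del Pezzo surface (two-dimensional terminal singularities are smooth). Starting from an effective $\mathbb{Q}$-divisor $\Delta$ with $(X,\Delta)$ $\epsilon$-klt log Fano, restriction via adjunction produces $\Delta_F$ with $(F,\Delta_F)$ an $\epsilon$-klt log del Pezzo pair, so the two-dimensional Weak BAB (Corollary \ref{2bab}) bounds $(-K_F)^2$. To convert this into a bound on $(-K_X)^3$, I would use $\rho(X/T)=1$ to write $-K_X$ as an $f$-ample class plus a pullback from $\mathbb{P}^1$ and expand the triple self-intersection; the resulting mixed terms are controlled in terms of $(-K_F)^2$ and the pullback degree, the latter bounded by the relative anti-canonical geometry.

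Suppose $\dim T=2$. Then $f$ is a conic bundle onto a normal rational surface. I would invoke Ambro's canonical bundle formula to write
\[
K_X+\Delta\sim_{\mathbb Q} f^{*}(K_T+B_T+M_T),
\]
where $B_T$ is the discriminant part, $M_T$ the nef moduli part, and $-(K_T+B_T+M_T)$ ample. The key step is showing $(T,B_T+M_T)$ is $\epsilon'$-klt for some $\epsilon'=\epsilon'(\epsilon)>0$. Given this, two-dimensional Weak BAB bounds $(-K_T-B_T-M_T)^2$; combined with the identity $(-K_X)\cdot F=2$ for a generic $\mathbb{P}^1$-fiber $F$ and the conic bundle intersection theory, one then obtains the required bound on $(-K_X)^3$.

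The main obstacle is Case $\dim T=2$: producing a uniform $\epsilon'$-klt estimate for the base pair $(T,B_T+M_T)$. This requires an effective adjunction in relative dimension one that bounds the discriminant coefficients of $B_T$ purely in terms of $\epsilon$, together with control of the moduli part $M_T$ (which is only nef a priori, not effective). Case $\dim T=1$ is easier conceptually but still requires care in translating the fiberwise bound on $(-K_F)^2$ into a global bound on $(-K_X)^3$.
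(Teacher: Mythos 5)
Your case split by $\dim T$ matches the paper, and the $\dim T=0$ case via Kawamata is correct and identical. For the other two cases, however, there are genuine gaps, and they land exactly where the paper has to do real work.

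For $\dim T=1$, the proposal to bound $(-K_F)^2$ by two-dimensional Weak BAB is a correct first ingredient, but the plan to then ``write $-K_X$ as an $f$-ample class plus a pullback from $\mathbb{P}^1$ and expand the triple self-intersection'' does not work as stated: $-K_X$ is only big, not nef, so $\Vol(-K_X)\ne(-K_X)^3$ in general, and even if one arranges a nef representative, the coefficient of the pullback piece is precisely the quantity one needs to bound, and no mechanism for that is offered. The paper resolves this by a different strategy: if $\Vol(-K_Y)$ were large compared to $\Vol(-K_F)$, then $-K_Y-sF$ is $\bQ$-effective for $s$ proportional to $\Vol(-K_Y)/\Vol(-K_F)$; this produces, for two general fibers, a non-klt pair whose non-klt locus must connect the two fibers (Connectedness Lemma), and restricting the resulting horizontal non-klt center to a general fiber yields a generalized log canonical threshold $\le 2/s$ on an $\epsilon$-klt weak log del Pezzo pair. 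Bounding $s$ then reduces to bounding $\alpha$-invariants from below, i.e.\ the generalized Ambro's conjecture in dimension two (Theorem \ref{gac2}), which is the main new ingredient in the paper and is entirely absent from your sketch.

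For $\dim T=2$, your route via Ambro's canonical bundle formula is genuinely different from the paper's, which instead constructs fiber-supported non-klt pairs and applies Connectedness Lemma, using boundedness and $N(\epsilon)$-factoriality of the base (Theorem \ref{thmS}) to avoid vertical components. The canonical bundle formula route has two gaps. First, you need the $\epsilon'$-klt statement for $(T,B_T+M_T)$; the paper does cite the relevant input (Birkar's result, Theorem \ref{thmS}(iv)), so this can be filled, though the behavior of the nef moduli part $M_T$ needs care. Second and more seriously, as in the $\dim T=1$ case, bounding $(-K_T-B_T-M_T)^2$ plus the fiber degree $(-K_X)\cdot F=2$ does not by itself bound $\Vol(-K_X)$: one would again need to control how $-K_X$ sits relative to the pullback from $T$, and the argument you gesture at (``conic bundle intersection theory'') is not spelled out and faces the same non-nefness issue. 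You correctly flag both gaps in your last paragraph; the point is that closing them is precisely the content of Sections \ref{section ambro} and \ref{section mfs} of the paper, so the proposal as written is a plausible skeleton but does not yet constitute a proof.
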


Theorem \ref{BAB3main} follows from Theorems \ref{main thm mfs} and \ref{mfs3} directly.

To prove Theorem  \ref{mfs3}, we need to consider  $3$-folds $X$ of $\epsilon$-Fano type with a Mori fiber structure $X\rightarrow T$. There are $3$ cases:
\begin{itemize}
\item[(1)] $\dim T=0$, $X$ is a  $\mathbb{Q}$-factorial terminal $\mathbb{Q}$-Fano  $3$-folds with $\rho=1$;
\item[(2)] $\dim T=1$, $X\rightarrow T\simeq\mathbb{P}^1$ is a {\it del Pezzo fibration}, i.e. a general fiber of which is a smooth del Pezzo surface;
\item[(3)] $\dim T=2$, $X\rightarrow T$ is a {\it conic bundle}, i.e. a general fiber of which is a smooth rational curve.
\end{itemize}

The second statement is implied by the following fact: if $(X, \Delta)$ is a klt log Fano pair, then $X$ is rationally connected (see \cite[Theorem 1]{ZQ}), in particular, for any surjective morphism $X\rightarrow T$ to a normal curve, $T\simeq \mathbb{P}^1$.

In Case (1), $X$ is bounded by Kawamata \cite{K}, and the optimal bound of ${\rm Vol}(-K_X)=(-K_X)^3$ is $64$ due to the classification on smooth Fano $3$-folds of Iskovskikh and Mori--Mukai and by Namikawa's   result  \cite{Nami} (Gorenstein case) and Prokhorov \cite{ProkK} (non-Gorenstein case).

We will mainly treat Cases (2) and (3). 

One basic idea is to construct singular pairs which are not klt along fibers of $X\rightarrow T$. Then by Connectedness Lemma, we may find a non-klt center intersecting with the fibers. Then restricting on a general fiber, we get the bound after some arguments on lower dimensional varieties.  But several difficulties  arise here.
 
In Case (3), the difficulty arises  in the construction of singular pairs
 because we need to avoid  components which are  vertical over $T$. To do this, we need a good understanding of the singularities and the boundedness of the surface $T$, which was done by several papers such as \cite{AK2, Birkar}.

In Case (2), the difficulty arises in the last step. After restricting to a general fiber, we need to bound the (unusual) log canonical thresholds on surfaces. So we are done by proving the (generalized) Ambro's conjecture in dimension two.

\begin{definition}
Let $(X, B)$ be  an lc pair and $D\geq 0$ be a $\bQ$-Cartier $\bQ$-divisor. The
{\it log canonical threshold} of $D$ with respect to $(X, B)$ is
$$\lct(X, B; D) = \sup\{t\in \bQ \mid (X, B+ tD) \text{ is lc}\}.$$
For the purpose of this paper, we need to consider the case when $D$ is not necessarily effective. 
Let $G$ be a $\bQ$-Cartier $\bQ$-divisor satisfying $G+B\geq 0$, The
{\it unusual log canonical threshold} of $G$ with respect to $(X, B)$ is
$$\ulct(X, B; G) = \sup\{t\in [0,1] \cap \bQ \mid (X, B+ tG) \text{ is lc}\}.$$
Note that the assumption $t\in [0,1]$ guarantees that $B+ tG\geq 0$.
\end{definition}
\begin{conj}[Ambro's conjecture]\label{ac}
Fix $0<\epsilon<1$ and an integer $n>0$. 
Then there exists a  number $\mu(n,\epsilon)>0$ depending only on $n$ and $\epsilon$ with the following property:
 if $(Y, B)$ is an $\epsilon$-klt log Fano pair  of dimension $n$, then
$$
\inf\{\lct(Y,B;D)\mid D\sim_\bQ-(K_Y+B), D\geq 0 \}\geq \mu(n,\epsilon).
$$
\end{conj}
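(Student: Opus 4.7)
The plan is to reduce the lower bound on the log canonical threshold to a uniform bound on divisorial valuations of divisors linearly equivalent to $-(K_Y+B)$, and then to attack that bound by induction on dimension combined with subadjunction on non-klt centres.

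\textbf{Reduction to valuations.} Given an $\epsilon$-klt log Fano pair $(Y,B)$ of dimension $n$ and an effective $D\sim_\bQ -(K_Y+B)$, choose a divisorial valuation $v_E$ computing $t:=\lct(Y,B;D)$, so that
$$t=\frac{a_E(Y,B)+1}{v_E(D)}.$$
Since $(Y,B)$ is $\epsilon$-klt the numerator exceeds $\epsilon$, and the problem becomes: produce an upper bound $C(n,\epsilon)$ on $v_E(D)$, uniform over $D$ and over pairs $(Y,B)$. Via an Okounkov-body/asymptotic-multiplier-ideal estimate, $v_E(D)$ for $D\sim_\bQ L$ ample is controlled by $v_E(L)$ up to a factor depending only on $n$, so it suffices to bound $v_E(-(K_Y+B))$ for some valuation realising the threshold.

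\textbf{Induction via subadjunction.} I would induct on $n$; the base case $n=1$ reduces to $(\mathbb{P}^1,B)$ and is explicit. For the inductive step, if some $v_E$ violates the valuation bound above, perturb by a small multiple of an ample effective divisor and apply a tie-breaking argument (in the spirit of Corti--Keel--Matsuki) so that $(Y,B+t'D')$ acquires a unique minimal non-klt centre $W\subsetneq Y$. Kawamata subadjunction then yields a klt pair $(W,B_W+M_W)$ with $-(K_W+B_W+M_W)$ big; after bounding the moduli $b$-divisor $M_W$ (via boundedness of $\bQ$-complements and base-point-freeness for generically $-(K_Y+B)$-trivial fibrations), $(W,B_W)$ should be an $\epsilon'$-klt log Fano for some $\epsilon'=\epsilon'(n,\epsilon)>0$. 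Applying the inductive hypothesis on $W$ to the restriction of a suitable element of $|{-(K_Y+B)}|_\bQ$ then returns the desired upper bound on $v_E$, closing the induction.

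\textbf{Main obstacle.} The principal difficulty is the subadjunction step: controlling the moduli divisor $M_W$ quantitatively and preserving an effective lower bound on log discrepancies after restricting to $W$. Without such control the inductive $\epsilon'$ can degenerate to zero, and one loses the uniform constant. In dimension two the minimal non-klt centres are curves or points and the conjecture reduces to explicit estimates on $\epsilon$-klt log del Pezzo surfaces — using Alexeev's boundedness — which is the case actually needed for Theorem \ref{mfs3}. In the general case one appears to need boundedness of $\epsilon$-klt log Fanos in dimensions $<n$ as an input; circumventing this apparent circularity, for instance through a Birkar-style effective birationality theorem that produces bounded complements without assuming full boundedness of the ambient pair, is the heart of the matter.
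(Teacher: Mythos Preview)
The statement you are addressing is labelled a \emph{conjecture} in the paper, and the paper does not claim a proof in arbitrary dimension. The only thing the paper establishes is the case $n=2$, and even that is deduced from the stronger Theorem~\ref{gac2} (generalized Ambro's conjecture for surfaces) after a one-line reduction by terminalization. So there is no ``paper's own proof'' of the general statement to compare against; your proposal is attempting something the paper leaves open.

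Your outline contains two genuine gaps. First, the ``reduction to valuations'' step is not sound as written: the quantity $v_E(D)$ for $D\sim_\bQ L$ ample is \emph{not} controlled by any intrinsic invariant $v_E(L)$ of the linear series up to a dimensional constant. For a fixed ample class one can choose effective $D$ with $v_E(D)$ arbitrarily large (think of $D=mH$ on $\bP^n$ with $H$ a hyperplane through the centre of $E$); the whole content of the conjecture is precisely that the $\epsilon$-klt hypothesis on $(Y,B)$ prevents this, and no Okounkov-body or multiplier-ideal estimate supplies that for free. Second, the subadjunction step you yourself flag is a real obstruction: producing an $\epsilon'(n,\epsilon)$-klt log Fano structure on the minimal non-klt centre $W$, with $\epsilon'$ bounded away from zero uniformly, is essentially the adjunction-on-non-klt-centres machinery that underlies Birkar's later proof of BAB. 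At the time of this paper it was not available, and your sketch does not indicate how to control the moduli part $M_W$ effectively.

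For contrast, the paper's proof of the surface case (Theorem~\ref{gac2}) avoids subadjunction entirely. It contracts $(S,B)$ to $\bP^2$ or $\mathbb{F}_n$ with $n\le 2/\epsilon$, then studies a sequence of point blow-ups extracting a non-klt place of $(T,(1-a)B_T+aD_T)$; explicit multiplicity bookkeeping in the style of Alexeev--Mori bounds the length of this sequence by $64/\epsilon^3$, and a Fibonacci-type estimate on pulled-back multiplicities finishes the job. This is an elementary, two-dimensional argument with no inductive input from lower dimensions beyond the trivial curve case, and it is specific to surfaces.
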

Note that we do not assume any special conditions on the coefficients of $B$. The left-hand side of the inequality is called {\it  $\alpha$-invariant} of $(Y, B)$ which generalizes the concept of  $\alpha$-invariant of Tian for Fano manifolds in differential geometry (see \cite{CGM, Dem, Tian}). Recently Ambro \cite{Am} gave  a proof of this conjecture assuming that $(Y, B)$ is a toric pair where an explicit sharp number $\mu(n, \epsilon)$ was given.
For the purpose of this paper, we  need a stronger version of this conjecture where $D$ may not be effective.
\begin{conj}[generalized Ambro's conjecture]\label{gac}
Fix $0<\epsilon<1$ and an integer $n>0$. 
Then there exists a   number $\mu(n,\epsilon)>0$ depending only on $n$ and $\epsilon$ with the following property:
 if $(Y, B)$  is an  $\epsilon$-klt  weak log Fano pair  of dimension $n$  and $Y$ has  at worst terminal singularities, then
$$
\inf\{\ulct(Y,B;G)\mid G\sim_\bQ-(K_Y+B), G+B\geq 0 \}\geq \mu(n,\epsilon).
$$
\end{conj}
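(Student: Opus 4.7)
My approach is to reduce Conjecture \ref{gac} to an Ambro-type statement about the pair $(Y, 0)$ via the substitution $H := G + B$. Note $H \geq 0$ by the hypothesis $G + B \geq 0$, and $H \sim_{\bQ} -K_Y$. The elementary identity
\[
B + tG = (1-t)B + tH \qquad (t \in [0,1])
\]
gives $\glct(Y, B; G) = \sup\{t \in [0,1] : (Y, (1-t)B + tH) \text{ is lc}\}$. By affinity of log discrepancies in the boundary,
\[
a^{\log}_F\bigl(Y, (1-t)B + tH\bigr) = (1-t)\,a^{\log}_F(Y, B) + t\,a^{\log}_F(Y, H)
\]
for every geometric valuation $F$. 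The $\epsilon$-klt hypothesis $a^{\log}_F(Y,B) > \epsilon$ then implies that $(Y, (1-t)B + tH)$ is lc as soon as $(1-t)\epsilon + t\,a^{\log}_F(Y, H) \geq 0$ for every $F$ with $a^{\log}_F(Y, H) < 0$; so it suffices to produce a uniform positive lower bound $\lambda = \lambda(n,\epsilon) > 0$ such that $(Y, \lambda H)$ is lc for every effective $\bQ$-divisor $H \sim_{\bQ} -K_Y$.

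This auxiliary statement is essentially Conjecture \ref{ac} applied to the terminal pair $(Y, 0)$, except that $-K_Y$ may fail to be nef. Writing $-K_Y = -(K_Y+B) + B$ shows $-K_Y$ is always big, and any $-K_Y$-negative curve $C$ satisfies $B \cdot C \leq -K_Y \cdot C < 0$, forcing $C \subset \mathrm{Supp}(B)$; this confines the failure of nefness to a controlled locus. In dimension two (the setting required for Theorem \ref{BAB3main}), $Y$ is smooth and the Weak BAB Conjecture established by the author in \cite{J1} yields the uniform bound $(-K_Y)^2 \leq M(2, \epsilon)$. One can then bound $\mathrm{mult}_P(H)$ at every point $P$ by pairing $H$ with a general member of a suitable pluri-anticanonical linear system, and control contributions from higher exceptional valuations through explicit surface resolutions, producing the required $\lambda(2, \epsilon)$.

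Substituting back through the identity above yields a lower bound for $\glct(Y, B; G)$ depending only on $n$ and $\epsilon$, which proves the conjecture in dimension two. The principal obstacle is the multiplicity estimate: coefficients of $H$ at components of $\mathrm{Supp}(B)$ are only constrained by $\mathrm{coeff}(G) \geq -\mathrm{coeff}(B) > -(1-\epsilon)$ (not by $\mathrm{coeff}(G) \geq 0$), so $H$ can be large there, and valuations lying over the base locus of $|{-K_Y}|$ must be handled by a more detailed analysis. A case-by-case argument driven by the classification of smooth weak del Pezzo surfaces, together with the boundedness of $(-K_Y)^2$, should resolve these issues.
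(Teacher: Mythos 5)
Your reduction has a genuine gap at the step ``so it suffices to produce a uniform $\lambda=\lambda(n,\epsilon)>0$ such that $(Y,\lambda H)$ is lc.'' Log-canonicity of $(Y,\lambda H)$ only controls $\mult_F\pi^*H$ \emph{relative to} $a^{\log}_F(Y,0)$: it gives $a^{\log}_F(Y,H)\geq (1-1/\lambda)\,a^{\log}_F(Y,0)$, and $a^{\log}_F(Y,0)$ is unbounded as $F$ ranges over valuations over a fixed smooth surface. So the hypotheses ``$a^{\log}_F(Y,B)>\epsilon$'' and ``$(Y,\lambda H)$ lc'' together do \emph{not} imply $(1-t)\epsilon+t\,a^{\log}_F(Y,H)\geq 0$ for any uniform $t>0$. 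Concretely, on $Y=\mathbb{C}^2$ with $H=\tfrac{3}{2}(L_1+L_2)$ the pair $(Y,\tfrac{2}{3}H)$ is lc, yet repeatedly blowing up $\widetilde{L}_1\cap E_i$ produces valuations $E_k$ with $a^{\log}_{E_k}(Y,\tfrac{2}{3}H)=0$ while $a^{\log}_{E_k}(Y,H)\to-\infty$. Your chain of inequalities uses only the lower bound $a^{\log}_F(Y,B)>\epsilon$ for the $B$-part, so for such $F$ the quantity $(1-t)\epsilon+t\,a^{\log}_F(Y,H)$ forces $t\to0$, and the reduction collapses. (The paper even points out that Conjecture \ref{ac} --- essentially your auxiliary statement for $(Y,0)$ --- \emph{follows from} Conjecture \ref{gac} by terminalization, not the other way around; so the direction of your reduction is against the grain.)

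There is a second, smaller gap in the auxiliary step: bounding $\mult_P(H)$ via $(-K_Y)^2\leq M(2,\epsilon)$ controls only the first blow-up; the lct depends on the whole singularity type of $H$, not just its multiplicity, and ``control contributions from higher exceptional valuations through explicit surface resolutions'' is exactly the content that needs a quantitative argument.

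The paper's actual proof avoids the reduction entirely and is essentially orthogonal to your plan. After using Base Point Free to write $K_S+B+M\sim_{\mathbb{Q}}0$ with $(S,B+M)$ $\epsilon$-klt, it pushes forward along an MMP to $T=\mathbb{P}^2$ or $\mathbb{F}_n$ with $n\leq 2/\epsilon$, then follows a tower of blow-ups at non-klt centers. It splits the tower into (a) at most $64/\epsilon^3$ blow-ups where $\mult_{P_i}B_i\geq\epsilon/2$ (bounded by self-intersection bookkeeping of $B$), and (b) the remaining ones, where $\mult_{P_i}B_i<\epsilon/2$ forces the multiplicity of $aD_T$ along some exceptional valuation to be $\geq\epsilon/4$ via Lemma \ref{multi}. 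The key quantitative estimate --- your missing ``contributions from higher exceptional valuations'' --- is Claim \ref{FFF}: the pullback multiplicity grows at most like a Fibonacci number in the number of blow-ups. Combined with $\mult_{Q_1}(D_T)\leq n+4$ from Lemma \ref{multQ}, this gives the explicit lower bound on $a$. So the structural use of $(S,B)$, the control of the number of blow-up steps, and the Fibonacci bound are the essential ingredients, none of which appear in your sketch.
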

Note that Conjecture \ref{ac} follows from Conjecture \ref{gac} easily after taking a terminalization of $(Y,B)$.

We prove the conjecture in dimension two by following some ideas in the proof of the BAB Conjecture in dimension two  (\cite{AK2, AM}). But it seems that this conjecture does not follow from the BAB Conjecture  trivially.
 \begin{thm}\label{gac2}
Conjecture \ref{gac} holds for $n=2$.
\end{thm}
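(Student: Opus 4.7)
The plan is to argue by contradiction: assume $(Y_i, B_i, G_i)$ is a sequence of counterexamples with $t_i := \glct(Y_i, B_i; G_i) \to 0$. Since $Y_i$ is terminal of dimension two it is smooth, and $(Y_i, B_i)$ being $\epsilon$-klt weak log Fano means its anti-canonical model is an $\epsilon$-klt log Fano surface with at worst Du Val singularities. By Alexeev's BAB in dimension two (\cite{AK2, AM}), these models lie in a bounded family; since the minimal resolution is determined by the (finite set of possible) ADE configurations, the smooth surfaces $Y_i$ themselves lie in a bounded family $\mathcal{Y}_\epsilon$. Fix a uniformly very ample divisor $H$ on $\mathcal{Y}_\epsilon$, with $-K_{Y_i} \cdot H \leq C_0 = C_0(\epsilon)$ and $Z \cdot H \geq 1$ for every integral curve $Z \subset Y_i$.

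I then extract the intersection bounds. Since $-(K_{Y_i}+B_i)$ is nef and $H$ is ample, both $B_i \cdot H \leq -K_{Y_i}\cdot H \leq C_0$ and $G_i \cdot H = -(K_{Y_i}+B_i)\cdot H \leq C_0$. Writing $G_i = G_i^+ - G_i^-$ for its positive and negative parts, the hypothesis $B_i+G_i \geq 0$ forces $G_i^-\leq B_i$, hence $G_i^-\cdot H\leq C_0$, and therefore $G_i^+\cdot H = G_i\cdot H + G_i^-\cdot H \leq 2C_0$.

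I conclude by analyzing the minimal log canonical center $W_i$ of the non-klt lc pair $(Y_i, B_i + t_i G_i)$. In the curve case, $W_i = Z_i$ is a prime divisor with $\mathrm{coeff}_{Z_i}(B_i + t_i G_i)=1$, and the $\epsilon$-klt bound $\mathrm{coeff}_{Z_i}(B_i)\leq 1-\epsilon$ forces $\beta_i:=\mathrm{coeff}_{Z_i}(G_i)\geq \epsilon/t_i$. Since $\beta_i Z_i \leq G_i^+$ and $Z_i \cdot H\geq 1$, the previous step gives $\beta_i\leq G_i^+\cdot H \leq 2C_0$, hence $t_i\geq \epsilon/(2C_0)$, contradicting $t_i\to 0$. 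In the point case $W_i=\{P_i\}$, the same argument applied to $\mult_{P_i}$ in place of $\mathrm{coeff}_{Z_i}$ and to a general $H'\in|H|$ through $P_i$ yields the identical bound whenever the lc place is computed by the first blow-up of $P_i$; the general case I would reduce to this via a standard tiebreaking, extracting the divisorial lc place by an iterated blow-up whose depth is bounded in terms of $\epsilon$. The main obstacle I anticipate is making the reduction in the point-center case fully rigorous, since a naive perturbation by a small multiple of $H$ may destroy either the $\epsilon$-klt or the weak log Fano property; once handled, one obtains the explicit value $\mu(2, \epsilon) = \epsilon/(2 C_0(\epsilon))$.
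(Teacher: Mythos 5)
Your framework — argue by contradiction via boundedness of the family, fix a uniform very ample $H$, and convert the discrepancy condition into a degree bound — is genuinely different from the paper's. The paper avoids boundedness of the ambient surface entirely: it pushes the data forward to $T=\bP^2$ or $\mathbb{F}_n$ with $n\le 2/\epsilon$ and then works out explicit numerical bounds there. Your curve case is correct and quite clean: from $a_{Z_i}(Y_i,B_i)>-1+\epsilon$ you get $\mathrm{coeff}_{Z_i}(G_i)\ge\epsilon/t_i$, while $G_i^+\cdot H\le 2C_0$ gives $\mathrm{coeff}_{Z_i}(G_i)\le 2C_0$, a contradiction. (Two smaller issues with the setup: the anti-canonical model $\bar{Y}_i$ of the \emph{pair} $(Y_i,B_i)$ is not in general Du Val — it contracts curves with discrepancy in $(-1+\epsilon,0]$, not $\ge 0$ — and $Y_i$ is not in general the minimal resolution of $\bar Y_i$, so ``determined by ADE configurations'' is not the right justification for boundedness of the smooth models $Y_i$. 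This is repairable, but it should be argued from \cite[Thm.~6.8]{AK2} or \cite[Thm.~1.8]{AM} rather than asserted.)

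The essential gap is the point case, which you yourself flag but then dismiss with ``extracting the divisorial lc place by an iterated blow-up whose depth is bounded in terms of $\epsilon$.'' That depth is \emph{not} bounded in terms of $\epsilon$ alone, and this is precisely where the hard work of the proof lives. If $E_i$ is the lc place over $P_i$, the condition $a_{E_i}(Y_i,B_i+t_iG_i)=-1$ combined with $a_{E_i}(Y_i,B_i)>-1+\epsilon$ gives $\mathrm{ord}_{E_i}(G_i)>\epsilon/t_i$; but $\mathrm{ord}_{E_i}(G_i)$ can vastly exceed $\mult_{P_i}(G_i^+)$ when $E_i$ is deep, and your degree bound only controls $\mult_{P_i}(G_i^+)\le 2C_0$, not the valuation along a deeply infinitely-near place. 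The paper handles exactly this: it shows that one cannot hope to bound the total number $r$ of blow-ups, and instead bounds the number $k$ of blow-ups at which the residual boundary still has multiplicity $\ge\epsilon/2$ (via the self-intersection argument giving $k\le 64/\epsilon^3$), stops at level $k+1$ where there must be a detectable contribution from $aD$ of size $\ge\epsilon/4$, and then propagates that back to the base using the Fibonacci estimate (Claim~\ref{FFF}), which is precisely the quantitative expression of how $\mathrm{ord}_E$ relates to $\mult_P$ through $k$ blow-ups. Without such a mechanism, your point-case reduction does not close, and the proposed bound $\mu(2,\epsilon)=\epsilon/(2C_0)$ is not justified.
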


%For the proof of Corollary \ref{baty}, we basically follow the idea in \cite{Bor} to bound the Hilbert polynomials by  \cite{Matsusaka}.

This paper is organized as follows. In Section \ref{section reduction}, we prove the reduction step to Mori fiber spaces (Theorem \ref{main thm mfs}). In Section \ref{section ambro}, we prove the generalized Ambro's conjecture in dimension two  (Theorem \ref{gac2}). In Section \ref{section mfs}, we prove the Weak BAB Conjecture for Mori fiber spaces in dimension three (Theorem \ref{mfs3}). %In Section \ref{section bat}, we prove the boundedness of log Fano threefolds of fixed index  (Corollary \ref{baty}).

\section{Preliminaries}

\subsection{Volumes}
\begin{definition}
Let $X$ be an $n$-dimensional projective variety  and $D$ be a Cartier divisor on $X$. The {\it volume} of $D$ is the real number
$$
{\rm Vol}(D)=\limsup_{m\rightarrow \infty}\frac{h^0(X,\OO_X(mD))}{m^n/n!}.
$$
Note that the limsup is actually a limit. Moreover by the homogenous property of the volume, we can extend the definition to $\bQ$-Cartier $\bQ$-divisors. Note that if $D$ is a nef $\bQ$-divisor, then $\Vol(D)=D^n$. If $D$ is a non-$\bQ$-Cartier $\bQ$-divisors, we may take a $\bQ$-factorialization of $X$, i.e., a birational morphism $\phi:Y\to X$ which is isomorphic in codimension one and $Y$ is $\bQ$-factorial, then $\Vol(D):=\Vol(\phi^{-1}_*D)$. Note that $\bQ$-factorialization always exists for klt pairs (cf. \cite[Theorem 1.4.3]{BCHM}).
\end{definition}

For more background on volumes, see \cite[11.4.A]{Positivity2}.

\subsection{Hirzebruch surfaces}
We
recall some basic properties of the Hirzebruch surfaces
$\mathbb{F}_n=\mathbb{P}_{\mathbb{P}^1}(\mathcal{O}_{\mathbb{P}^1}\oplus\mathcal{O}_{
\mathbb{P}^1}(n))$, $n\geq  0$. Denote by $h$ (resp. $f$) the
class in $\textrm{Pic }\mathbb{F}_n$ of the tautological bundle
$\mathcal{O}_{\mathbb{F}_n}(1)$ (resp. of a fiber). Then
$\textrm{Pic }\mathbb{F}_n=\mathbb{Z}h\oplus \mathbb{Z}f$ with
$f^2=0$, $f\cdot h=1$, $h^2=n$. If $n>0$, there is a unique
irreducible curve $\sigma_n\subset \mathbb{F}_n$ such that $\sigma_n \sim h-nf$, $\sigma_n^2=-n$. For
$n=0$, we can also choose one curve whose class in $\textrm{Pic
}\mathbb{F}_0$ is $h$ and denote it by $\sigma_0$.  
Note that 
$$
-K_{\mathbb{F}_n}\sim 2h-(n-2)f\sim 2\sigma_n+(n+2)f.
$$
\begin{lem}\label{multif}
For an effective $\bQ$-divisor $D\sim_\bQ  -K_{\mathbb{F}_n}$ and a   fiber $f$,  $\mult_f D\leq n+2$.
\end{lem}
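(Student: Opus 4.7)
The plan is to run two short intersection-theoretic computations on $\mathbb{F}_n$. Write
\[
D = a\,\sigma_n + \mu f + D',
\]
where $a = \mult_{\sigma_n} D$, $\mu = \mult_f D$, and $D'$ is an effective $\bQ$-divisor whose support contains neither $\sigma_n$ nor $f$. (For $n=0$, take $\sigma_0$ to be the chosen section with $\sigma_0^2 = 0$; the argument is uniform.) The target inequality $\mu \leq n+2$ should fall out by intersecting $D$ with $f$ and with $\sigma_n$, using the positivity of $D'$ against both.

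First I would bound $a$. Since $f$ is a base-point-free class on $\mathbb{F}_n$ (hence nef), and $D'$ is effective, $D'\cdot f \geq 0$. Using $\sigma_n \cdot f = 1$ and $f^2 = 0$, together with $D \cdot f = -K_{\mathbb{F}_n}\cdot f = 2$, the decomposition gives
\[
2 = D\cdot f = a + D'\cdot f,
\]
so $a \leq 2$.

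Next I would bound $\mu$ by intersecting with $\sigma_n$. Since $\sigma_n$ is irreducible and is not a component of $D'$, every irreducible component of $\Supp D'$ meets $\sigma_n$ non-negatively, so $D'\cdot \sigma_n \geq 0$. From $-K_{\mathbb{F}_n}\sim 2\sigma_n + (n+2)f$ we compute
\[
D\cdot \sigma_n = -K_{\mathbb{F}_n}\cdot \sigma_n = 2(-n) + (n+2) = 2-n,
\]
while on the other hand
\[
D\cdot \sigma_n = a\sigma_n^2 + \mu\, f\cdot \sigma_n + D'\cdot \sigma_n = -an + \mu + D'\cdot \sigma_n.
\]
Equating and using $D'\cdot \sigma_n \geq 0$ yields $\mu \leq 2 + n(a-1)$. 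Combined with $a \leq 2$ from the previous step, this gives $\mu \leq n+2$, which is exactly the claim (and sharp, as witnessed by $D = (n+2)f + 2\sigma_n$).

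There is no genuine obstacle here: the only things to check are the two sign conditions $D'\cdot f \geq 0$ (nefness of the fiber class) and $D'\cdot \sigma_n \geq 0$ (no $\sigma_n$ in $\Supp D'$, so each irreducible component of $D'$ meets $\sigma_n$ non-negatively on a smooth surface). Everything else is the standard intersection table $f^2=0$, $\sigma_n\cdot f = 1$, $\sigma_n^2 = -n$ recorded just above the lemma.
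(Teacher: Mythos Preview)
Your argument is correct, but the paper's proof is a one-liner that avoids the auxiliary quantity $a=\mult_{\sigma_n}D$ entirely. The paper simply intersects the effective divisor $D-(\mult_f D)f$ with the nef class $h$: since $h$ is nef and $D-(\mult_f D)f\geq 0$, one has $(D-(\mult_f D)f)\cdot h\geq 0$, and computing $D\cdot h=(2h-(n-2)f)\cdot h=n+2$ gives the bound immediately. Your two-step route (intersect with $f$ to bound $a$, then with $\sigma_n$ to bound $\mu$ in terms of $a$) amounts to the same thing once you observe that $h=\sigma_n+nf$: adding $n$ times your first equation to your second recovers exactly the paper's computation $\mu + D'\cdot h = n+2$. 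So nothing is wrong, but you have split one nef-class intersection into two and then recombined them; the paper's version is shorter and does not require separately justifying $D'\cdot\sigma_n\geq 0$.
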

\begin{proof}
Since $D-(\mult_f D)f$ is effective, $(D-(\mult_f D)f)\cdot h\geq 0$. On the other hand, $(D-(\mult_f D)f)\cdot h=n+2-\mult_f D$.
\end{proof}
\begin{lem}\label{multQ}
Let $T=\bP^2$ or $\mathbb{F}_n$,  then for an effective $\bQ$-divisor $D\sim_\bQ  -K_{T}$ and a point $Q$,
$\mult_Q D\leq n+4$ holds. Moreover, if we write $D=\sum_j b_jD_j$ where $D_j$ are distinct prime divisors and assume that $b_j\leq 1$ for all $j$, then $\sum_j b_j\leq 4.$
\end{lem}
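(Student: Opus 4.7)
The plan is to handle both $\bP^2$ and $\mathbb{F}_n$ by intersecting $D$ with natural test curves (lines, fibers, and sections), treating separately the curves that appear as components of $D$ (where the hypothesis $b_j\leq 1$ together with Lemma \ref{multif} is used) and those that do not (where effectivity gives the desired inequality for free).

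The case $T=\bP^2$ is handled directly. Since $D$ has only finitely many components while the pencil of lines through $Q$ is infinite, I can pick a line $L \ni Q$ not contained in $\mathrm{Supp}(D)$, and then $\mult_Q D \leq D\cdot L = -K_{\bP^2}\cdot L = 3 \leq n+4$. For the sum bound, writing $3 = D\cdot H = \sum_j b_j \deg D_j \geq \sum_j b_j$ (since each irreducible plane curve has degree $\geq 1$) suffices — here the hypothesis $b_j\leq 1$ is not even needed.

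For $T=\mathbb{F}_n$, the multiplicity bound comes from intersecting $D$ with the fiber $f$ through $Q$. If $f\not\subset\mathrm{Supp}(D)$, then $\mult_Q D\leq D\cdot f = 2$. Otherwise, let $a$ be the coefficient of $f$ in $D$ and write $D=af+D'$ with $f\not\subset\mathrm{Supp}(D')$; Lemma \ref{multif} gives $a=\mult_f D\leq n+2$, while $\mult_Q D' \leq D'\cdot f = (D-af)\cdot f = 2$, so $\mult_Q D = a + \mult_Q D' \leq n+4$.

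For the sum bound on $\mathbb{F}_n$, I would split the components according to whether or not they are fibers. Every irreducible curve on $\mathbb{F}_n$ is either a fiber or dominates $\bP^1$, so each non-fiber component $D_j$ has $D_j\cdot f \geq 1$. Intersecting with a general fiber $f_0$ avoiding all components of $D$ gives
\[
2 = -K\cdot f_0 = D\cdot f_0 \geq \sum_{j\text{ non-fiber}} b_j.
\]
For the fiber components, set $\beta := \sum_{j\text{ fiber}} b_j$ and intersect with the negative section $\sigma_n$. If $\sigma_n$ is a component of $D$ with coefficient $b_\sigma \leq 1$, let $E:=D-b_\sigma\sigma_n\geq 0$; since $\sigma_n\not\subset\mathrm{Supp}(E)$ we have $E\cdot\sigma_n\geq 0$, and
\[
E\cdot\sigma_n = D\cdot\sigma_n - b_\sigma\sigma_n^2 = (2-n)+b_\sigma n \leq 2.
\]
On the other hand, each fiber component contributes $f_j\cdot\sigma_n = 1$ and every other non-$\sigma_n$ component has non-negative intersection with $\sigma_n$, so $E\cdot\sigma_n \geq \beta$. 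Hence $\beta\leq 2$; the case where $\sigma_n$ is not a component is subsumed by taking $b_\sigma=0$. Adding the two bounds gives $\sum_j b_j\leq 4$.

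The main obstacle, and the point where the hypothesis $b_j\leq 1$ is essential, is controlling the contribution of $\sigma_n$: without the bound $b_\sigma\leq 1$, the term $b_\sigma n$ in $E\cdot\sigma_n = (2-n)+b_\sigma n$ would blow up with $n$ and destroy the inequality $\beta\leq 2$.
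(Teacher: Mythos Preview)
Your proof is correct. For the multiplicity bound $\mult_Q D \leq n+4$, your argument is essentially the same as the paper's: in the $\mathbb{F}_n$ case you write $D = af + D'$ and combine $\mult_Q D' \leq D'\cdot f = 2$ with $a = \mult_f D \leq n+2$ from Lemma~\ref{multif}, which is exactly the content of the paper's inequality $2 = D\cdot f \geq \mult_Q D - \mult_f D$.

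For the sum bound $\sum_j b_j \leq 4$ on $\mathbb{F}_n$, however, you give a genuinely self-contained argument, whereas the paper simply invokes \cite[Lemma~1.4]{AM}. Your approach---splitting components into fibers and non-fibers, bounding the non-fiber part by intersecting with a general fiber, and bounding the fiber part by intersecting with $\sigma_n$ after peeling off the $\sigma_n$-component---is elementary and transparent. In particular, it makes explicit that the hypothesis $b_j \leq 1$ is needed only to control the coefficient $b_\sigma$ of the negative section, since the term $b_\sigma n$ in $E\cdot\sigma_n = (2-n) + b_\sigma n$ is what would otherwise blow up with $n$. This is more informative than a black-box citation, at the modest cost of a few extra lines.
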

\begin{proof}
If $T=\bP^2$, taking a general line $L$ through $Q$, we have
$$
3=(D\cdot L)\geq \mult_{Q}(D).
$$

If  $T=\mathbb{F}_n$, take $f$ to be the fiber passing through $Q$, by Lemma \ref{multif} and intersection theory, we have
$$
2=D\cdot f\geq \mult_{Q}D- \mult_{f}D\geq  \mult_{Q}D-n-2.
$$

For the latter statement, if $T=\mathbb{F}_n$, then the conclusion  follows from  \cite[Lemma 1.4]{AM}.  If $T=\mathbb{P}^2$, then $\sum b_j\leq 3$ by degree computation. 
\end{proof}

\subsection{Non-klt centers and connectedness lemma}
\begin{definition}
Let $X$ be a normal projective variety and $\Delta$ be a $\bQ$-divisor on $X$ such that $K_X+\Delta$ is $\bQ$-Cartier. Let $f: Y\rightarrow X$ be a log
resolution of $(X, \Delta)$, write
$$
K_Y =f^*(K_X+\Delta)+\sum a_iF_i,
$$
where $F_i$ is a prime divisor.  $F_i$ is called a {\it non-klt place}  of $(X, \Delta)$  if $a_i\leq -1$.
A proper subvariety $V\subset X$ is called a {\it non-klt center} of $(X, \Delta)$ if it is the image of a non-klt place. The {\it non-klt locus} $\text{Nklt}(X, \Delta)$ is the union of all  non-klt centers of $(X, \Delta)$. A non-klt center is {\it maximal} if it is an irreducible component of $\text{Nklt}(X, \Delta)$.
\end{definition}

The following lemma suggests a standard way to construct non-klt centers.
\begin{lem}[{cf. \cite[Lemma 2.29]{KM}}]\label{dimk}
Let $(X, \Delta)$ be a pair and $Z\subset X$ be a closed subvariety of codimesion $k$ such that $Z$ is not contained in the singular locus of $X$. If $\mult_Z \Delta\geq k$, then $Z$ is a non-klt center of $(X, \Delta)$. 
\end{lem}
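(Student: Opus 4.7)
The plan is to exhibit a non-klt place of $(X,\Delta)$ whose center on $X$ is exactly $Z$, obtained by a single blow-up along $Z$ in a region where both $X$ and $Z$ are smooth.

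First I would localize. The hypothesis $Z \not\subseteq \Sing(X)$ combined with the fact that any variety is smooth at its generic points allows me to choose a Zariski open $U \subseteq X$ which meets $Z$ and on which both $U$ and $Z_U := Z \cap U$ are smooth, with $Z_U$ of codimension $k$ in $U$. Since discrepancies of divisorial valuations and their centers on $X$ are insensitive to this restriction (the closure in $X$ of the center on $U$ recovers the center on $X$, and the multiplicity $\mult_Z \Delta$ is computed at the generic point of $Z$), it suffices to produce the required divisorial valuation by working on $U$.

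Next I would run the discrepancy computation on the blow-up $\pi : \tilde U \to U$ along the smooth center $Z_U$, with smooth exceptional divisor $E$. The standard smooth-blow-up formula gives $K_{\tilde U} = \pi^{*} K_U + (k-1) E$, and writing $\pi^{*}(\Delta|_U) = \tilde \Delta + m E$ with $\tilde \Delta$ the proper transform, the definition of multiplicity along $Z$ yields $m = \mult_Z \Delta \ge k$. Combining,
$$K_{\tilde U} = \pi^{*}(K_U + \Delta|_U) + (k-1-m)\, E - \tilde \Delta,$$
so the discrepancy of $E$ with respect to $(X,\Delta)$ is $(k-1)-m \le -1$.

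Finally, by Hironaka I would choose a log resolution $f : Y \to X$ of $(X,\Delta)$ that factors through $\pi$ over $U$; the strict transform of $E$ on $Y$ is then a prime $f$-exceptional divisor whose discrepancy is still $\le -1$ and whose image on $X$ is the closure of $\pi(E) = Z_U$, namely $Z$ itself. This exhibits $Z$ as a non-klt center. The only step that requires any care is the initial localization — one must ensure that $Z$ really meets a smooth open subset of $X$ at which $Z$ itself is also smooth, so that the smooth-blow-up formulas apply — but this is exactly what the hypothesis plus genericity of the smooth loci of $Z$ and $X$ provides. Everything after that is the textbook discrepancy computation for a blow-up along a smooth center in a smooth variety.
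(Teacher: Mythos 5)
Your proof is correct, and it is essentially the standard argument: the paper does not prove this lemma itself but simply cites \cite[Lemma 2.29]{KM}, and your blow-up-along-a-smooth-center computation, giving $a_E(X,\Delta) = (k-1) - \mult_Z\Delta \le -1$ for the exceptional divisor $E$ over the smooth locus of $Z$, is exactly the argument underlying that reference. The localization step and the final passage to a global log resolution are handled appropriately; one could shorten the last step by simply observing that the discrepancy $a_E(X,\Delta)$ is an invariant of the divisorial valuation and therefore does not depend on the model on which $E$ is realized.
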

Recall that the {\it multiplicity} $\mult_ZF$ of a divisor $F$ along a subvariety $Z$ is defined by the multiplicity $\mult_xF$ of $F$ at a general point $x\in Z$.

Unfortunately, the converse of Lemma \ref{dimk} is not true unless $k=1$. Usually we do not have good estimates for the multiplicity along a non-klt center except the following lemma.

\begin{lem}[{cf. \cite[Theorem 9.5.13]{Positivity2}}]
Let $(X, \Delta)$ be a pair and $Z\subset X$ be a non-klt center of $(X, \Delta)$ such that $Z$ is not contained in the singular locus of $X$. Then $\mult_Z \Delta\geq 1$. 
\end{lem}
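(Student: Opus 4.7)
The plan is direct: I will derive the conclusion from a standard log-discrepancy estimate on a smooth ambient variety.

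Since the conclusion depends only on the behavior of $(X,\Delta)$ near the generic point of $Z$, I shrink $X$ so that $X$ is smooth and $Z$ is a smooth subvariety of codimension $k \geq 1$; let $\mathfrak{m}_Z \subset \OO_X$ denote the ideal sheaf of $Z$. By hypothesis there is a non-klt place $F$ of $(X,\Delta)$ with center exactly $Z$; write $v = \operatorname{ord}_F$. Since $v$ is centered along $Z$, one has $v(\mathfrak{m}_Z) > 0$.

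The argument rests on two elementary inequalities. First, a local equation of $\Delta$ at the generic point of $Z$ lies in $\mathfrak{m}_Z^{\mult_Z \Delta}$, which yields
$$v(\Delta) \;\geq\; (\mult_Z \Delta)\cdot v(\mathfrak{m}_Z).$$
Second, for any divisorial valuation $v$ centered along a smooth codimension-$k$ subvariety of smooth $X$, the log discrepancy satisfies
$$A_X(v) \;:=\; v(K_{Y/X}) + 1 \;\geq\; k\cdot v(\mathfrak{m}_Z),$$
equivalently $\operatorname{lct}(X,\mathfrak{m}_Z) = k$; this is classical and follows by factoring $v$ through the blowup of $Z$, whose exceptional divisor already realizes equality.

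Combining the two estimates,
$$a_F(X,\Delta) + 1 \;=\; A_X(v) - v(\Delta) \;\geq\; (k - \mult_Z \Delta)\,v(\mathfrak{m}_Z).$$
If $\mult_Z \Delta < 1$, then for every $k \geq 1$ we have $k - \mult_Z \Delta > 0$, and since $v(\mathfrak{m}_Z) > 0$ the right-hand side is strictly positive, forcing $a_F(X,\Delta) > -1$ and contradicting the hypothesis that $F$ is a non-klt place. Hence $\mult_Z \Delta \geq 1$. The only nontrivial ingredient is the bound $A_X(v) \geq k \cdot v(\mathfrak{m}_Z)$; I expect no serious obstacle beyond invoking this standard fact cleanly. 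As a backup, one could instead argue by induction on $k$: cutting with a general smooth divisor $S \supset Z$ preserves $\mult_Z \Delta$ (since $\mult_Z(\Delta|_S) = \mult_Z \Delta$ for general $S$) while reducing the codimension by one, and a center-preserving form of inversion of adjunction transfers the non-klt place at $Z$ from $(X,\Delta)$ to $(S,\Delta|_S)$, reducing to the divisorial base case where $\mult_Z \Delta = \operatorname{coeff}_Z \Delta \geq 1$ by definition.
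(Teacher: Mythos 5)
Your main argument is correct, and it gives a clean standalone proof; the paper itself does not prove this lemma but simply cites Lazarsfeld, Positivity II, Theorem 9.5.13, so there is no internal proof to compare against. After localizing so that $X$ is smooth and $Z$ is smooth of codimension $k$, your two inequalities $v(\Delta) \geq (\mult_Z\Delta)\,v(\mathfrak{m}_Z)$ and $A_X(v) \geq k\,v(\mathfrak{m}_Z)$ are both standard (the first by applying $v$ to local equations of the components of $\Delta$ and summing with the rational coefficients, the second being the statement $\lct(X,\mathfrak{m}_Z)=k$, realized with equality by the exceptional divisor of the blow-up of $Z$), and combining them with $a_F(X,\Delta)+1 = A_X(v)-v(\Delta)$ and $v(\mathfrak{m}_Z)>0$ gives exactly the contrapositive you need. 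One small phrasing point: ``a local equation of $\Delta$ lies in $\mathfrak{m}_Z^{\mult_Z\Delta}$'' is loose since $\mult_Z\Delta$ is typically not an integer; the intended meaning, applying $v$ to each prime component $D_i$ and using linearity, is of course fine and you should say that. The ``backup'' inductive argument via a general hypersurface $S\supset Z$ and inversion of adjunction is sketchier: the usual form of inversion of adjunction compares $(X,S+\Delta)$ with $(S,\operatorname{Diff}_S\Delta)$, and you have not added $S$ to the boundary, so transferring a non-klt place centered at $Z$ from $(X,\Delta)$ to $(S,\Delta|_S)$ needs a more careful statement (for instance by first arranging $\mult_Z\Delta$ to be close to $1$ and using semicontinuity of $\lct$ under restriction to a general hyperplane section). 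Since the primary argument is complete and correct, this does not affect the verdict.
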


If we assume some simple normal crossing condition on the boundary, we can get more information on the multiplicity along a non-klt center. For simplicity, we just consider surfaces.
\begin{lem}[{cf. \cite[4.1 Lemma]{McK}}]\label{multi}
Fix $0<e<1$. Let $S$ be a smooth surface, $B$ be an effective $\bQ$-divisor, and $D$ be a (not necessarily effective) simple normal crossing supported $\bQ$-divisor. Assume that the coefficients of $D$ are at most $e$ and $\mult_P B\leq 1-e$ for some point $P$, then for an arbitrary divisor $E$ centered on $P$ over $S$, $a_E(S, B+D)\geq -e$. 
In particular, if $Z$ is a non-klt center of $(S, B+D)$ and the coefficients of $D$ are at most $e$, then $\mult_Z B> 1-e$.
\end{lem}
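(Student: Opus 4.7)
The plan is to prove the main assertion by induction on the minimal number of closed-point blow-ups required to realize the divisor $E$ over $P$, and then to read off the \emph{in particular} clause as an immediate contrapositive. Every divisorial valuation on a smooth surface with closed center can be reached by such a sequence of blow-ups, so the induction is well-founded.

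For the base case, let $f_1 \colon Y_1 \to S$ be the blow-up of $P$ with exceptional divisor $E_1$, and take $E = E_1$. A direct computation gives
$$a_{E_1}(S, B+D) = 1 - \mult_P B - \mult_P D.$$
Because $D$ has simple normal crossing support on the smooth surface $S$, at most two components of $D$ pass through $P$, and each contributes a coefficient of size at most $e$; hence $\mult_P D \leq 2e$. Combined with $\mult_P B \leq 1-e$, this yields $a_{E_1}(S, B+D) \geq 1 - (1-e) - 2e = -e$.

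For the inductive step, let $\widetilde{B}$ and $\widetilde{D}$ denote the strict transforms of $B$ and $D$ under $f_1$, and pass to the transformed pair $(Y_1, \widetilde{B} + D')$ with $D' := \widetilde{D} + (\mult_P(B+D) - 1) E_1$. The log pullback identity
$$K_{Y_1} + \widetilde{B} + D' = f_1^*(K_S + B + D)$$
guarantees that discrepancies of divisors over $Y_1$ agree with those over $S$. Then $\widetilde{B} \geq 0$, the support of $\widetilde{B} + D'$ is again simple normal crossing (point blow-ups on a smooth surface preserve SNC configurations), and the coefficients of $D'$ remain at most $e$: those of $\widetilde{D}$ are inherited from $D$, and the coefficient of $E_1$ is $\mult_P(B+D) - 1 \leq (1-e) + 2e - 1 = e$. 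The divisor $E$ is centered at some $P_2 \in E_1$ and is extracted by one fewer blow-up from $Y_1$, so to invoke the induction hypothesis on $(Y_1, \widetilde{B} + D')$ at $P_2$ it suffices to verify $\mult_{P_2} \widetilde{B} \leq 1 - e$. This follows from the classical monotonicity statement that the multiplicity of an effective divisor at any point of the exceptional divisor of a point blow-up on a smooth surface is bounded by the multiplicity at the blown-up point, giving $\mult_{P_2} \widetilde{B} \leq \mult_P B \leq 1-e$.

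For the \emph{in particular} clause, if $Z = P$ is a closed-point non-klt center, then some divisor $E$ over $P$ satisfies $a_E(S, B+D) \leq -1 < -e$, and the contrapositive of the main statement forces $\mult_P B > 1 - e$. If instead $Z$ is a curve, then $Z$ appears in the support of $B+D$ with coefficient at least $1$ and has coefficient at most $e$ in $D$, so $\mult_Z B \geq 1-e$ directly. The real obstacle is simply the bookkeeping of the inductive step: ensuring that the SNC property, the coefficient bound $\leq e$ on the $D$-part, and the multiplicity bound $\leq 1-e$ on the $B$-part all survive each individual point blow-up; the core technical input is the classical monotonicity of multiplicity under blow-up of a smooth point.
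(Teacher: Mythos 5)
Your proof is correct and follows essentially the same route as the paper's: induction on the length of the blow-up tower extracting $E$, with the key bookkeeping that at each blow-up the exceptional coefficient is $\mult(B+D)-1\le(1-e)+2e-1=e$ and the multiplicity bound $\mult B\le 1-e$ propagates by monotonicity of multiplicity under point blow-up. The only cosmetic difference is that you spell out the \emph{in particular} clause explicitly, including the case of a $1$-dimensional non-klt center, where you correctly obtain the weak inequality $\mult_Z B\ge 1-e$ rather than the strict one asserted in the paper (a negligible slip in the paper's statement that does not affect any of its applications, all of which take $Z$ to be a point).
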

\begin{proof}
By taking a sequence of point blow-ups, we can get the divisor $E$. Consider the blow-up at $P$, we have $f:S_1\rightarrow S$ with $$K_{S_1}+B_1+D_1+mE_1=f^*(K_S+B+D)$$ where $B_1$ and $D_1$ are the strict transforms of $B$ and $D$ respectively, and $E_1$ is the exceptional divisor with $$m=\mult_P(B+D)-1\leq 1-e+2e-1=e.$$ 
Now $D_1+mE_1$ is again simple normal crossing supported and $\mult_QB_1\leq \mult_PB$ for $Q\in E_1$. Hence, replacing $(S, B, D, P)$ by $(S_1, B_1, D_1+mE_1, Q)$,  by induction on the minimal number of blow-ups, we conclude that  the coefficient of $E$ is at most $e$ and hence $a_{E}(S, B+D)\geq -e$. 
\end{proof}

We have the following connectedness lemma of Koll\'{a}r and Shokurov for non-klt locus (cf.  
Shokurov \cite{Shokurov}, Koll\'{a}r \cite[17.4]{Kol92}).
\begin{thm}[Connectedness Lemma]
Let $f:X\rightarrow Z$ be a proper morphism of normal varieties with connected fibers and $D$ is a $\bQ$-divisor such that $-(K_X+D)$ is $\bQ$-Cartier, $f$-nef, and $f$-big. Write $D=D^+-D^-$ where $D^+$ and $D^-$ are effective with no common components. If $D^-$ is $f$-exceptional (i.e. all of its components have image of codimension at least $2$), then ${\rm Nklt} (X,D)\cap f^{-1}(z)$ is connected for any $z\in Z$. 
\end{thm}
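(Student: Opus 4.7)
The plan is to run the standard Shokurov--Koll\'ar argument via relative Kawamata--Viehweg vanishing on a log resolution, producing a surjection of coherent sheaves on $Z$ whose target encodes the connectedness of the non-klt locus along each fiber.

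First I would take a log resolution $g:Y\to X$ of $(X,D)$ and set $h=f\circ g$; note that $h_*\OO_Y=\OO_Z$, since $g_*\OO_Y=\OO_X$ by birationality and $f_*\OO_X=\OO_Z$ by connected fibers together with normality of $Z$. Writing $K_Y=g^*(K_X+D)+\sum_i a_iF_i$, I would split the discrepancy divisor into effective pieces
$$F=\sum_{a_i>0}a_iF_i,\qquad E=\sum_{a_i<0}(-a_i)F_i,$$
whose supports are disjoint, so that $K_Y+E=g^*(K_X+D)+F$. Each component of $F$ is either $g$-exceptional (hence $h$-exceptional) or the strict transform of a component of $D^-$; in the latter case it is again $h$-exceptional by the assumption that $D^-$ is $f$-exceptional. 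Hence $\lceil F\rceil$ is effective and $h$-exceptional, and the support of $\lfloor E\rfloor$ is the union $N$ of non-klt places on $Y$, with $g(N)=\Nklt(X,D)$.

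Next I would consider the integral divisor $L=\lceil F\rceil-\lfloor E\rfloor$. A direct rearrangement gives
$$L-K_Y=-g^*(K_X+D)+\bigl((\lceil F\rceil-F)+\{E\}\bigr),$$
in which the parenthesized term is an SNC $\bQ$-divisor with coefficients in $[0,1)$ and $-g^*(K_X+D)$ is $h$-nef and $h$-big by pullback. Relative Kawamata--Viehweg vanishing then yields $R^1h_*\OO_Y(L)=0$, so the short exact sequence
$$0\to\OO_Y(L)\to\OO_Y(\lceil F\rceil)\to\OO_{\lfloor E\rfloor}(\lceil F\rceil)\to 0$$
pushes forward to a surjection $h_*\OO_Y(\lceil F\rceil)\twoheadrightarrow h_*\OO_{\lfloor E\rfloor}(\lceil F\rceil)$. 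Since $\lceil F\rceil\geq 0$ is $h$-exceptional and $Z$ is normal, Hartogs-type extension identifies $h_*\OO_Y(\lceil F\rceil)=\OO_Z$, yielding
$$\OO_Z\twoheadrightarrow h_*\OO_{\lfloor E\rfloor}(\lceil F\rceil).$$

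Finally, this realizes the target as $\OO_W$ for a closed subscheme $W\subseteq Z$ supported on $f(\Nklt(X,D))$. I would apply Stein factorization to $h|_N:N\to Z$ to conclude that each fiber $N\cap h^{-1}(z)$ is connected, then take $g$-images to obtain connectedness of $\Nklt(X,D)\cap f^{-1}(z)=g\bigl(N\cap h^{-1}(z)\bigr)$. The hard part will be exactly this last step: translating the sheaf-theoretic surjection, whose target is twisted by $\lceil F\rceil$ on the possibly non-reduced scheme $\lfloor E\rfloor$, into a genuinely geometric statement about connected components of $N\cap h^{-1}(z)$, and verifying that Stein factorization cleanly matches these components to the fibers of $W\to Z$ despite the twist and the non-reduced scheme structure.
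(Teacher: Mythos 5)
The paper does not prove this result; it is cited verbatim from Shokurov and Koll\'ar \cite[17.4]{Kol92}, so there is no in-paper proof to compare against. Your outline is the standard argument from those references and is essentially correct. The decomposition $K_Y+E=g^*(K_X+D)+F$ with $F,E$ effective of disjoint support, the observation that strict transforms of components of $D^-$ remain $h$-exceptional, the vanishing $R^1h_*\OO_Y(L)=0$, and the identification $h_*\OO_Y(\lceil F\rceil)=\OO_Z$ via normality of $Z$ are exactly the right ingredients. The ``hard part'' you flag at the end does go through cleanly: the surjection $\OO_Z\twoheadrightarrow h_*\OO_{\lfloor E\rfloor}(\lceil F\rceil)$ fits into a commuting square with the restriction $h_*\OO_Y\to h_*\OO_{\lfloor E\rfloor}$ on the left and the injection $h_*\OO_{\lfloor E\rfloor}\hookrightarrow h_*\OO_{\lfloor E\rfloor}(\lceil F\rceil)$ (injective since $\lceil F\rceil$ contains no component of $\lfloor E\rfloor$) on the bottom, forcing $\OO_Z\to h_*\OO_{\lfloor E\rfloor}$ to be surjective; Stein factorization of $\lfloor E\rfloor\to Z$ then gives a closed immersion onto $Z$, hence connected fibers, and pushing the topological statement down by the proper map $g$ gives connectedness of $\Nklt(X,D)\cap f^{-1}(z)$. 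The only point to make explicit is that you need the log resolution fine enough that $g(N)$ really computes all of $\Nklt(X,D)$.
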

\begin{remark}
There are two main cases of interest in the Connectedness Lemma:
\begin{itemize}
\item[(i)] $Z$ is a point and $(X,D)$ is a weak log Fano pair. Then $\Nklt(X,D)$ is connected.
\item[(ii)] $f:X\rightarrow Z$ is birational, $(Z,B)$ is a log pair and $K_X+D=f^*(K_Z+B)$.
\end{itemize}
\end{remark}

\section{Reduction to Mori fiber spaces}\label{section reduction}
In this section, we prove the reduction step to Mori fiber spaces (Theorem \ref{main thm mfs}). The ``only if''  direction is trivial,  the ``if''  direction is a consequence of the following theorem.

\begin{thm}\label{thm MFS}Fix an integer $n>0$ and $0<\epsilon<1$. 
Every $n$-dimensional variety $X$ of $\epsilon$-Fano type is birational to an $n$-dimensional variety  $X'$ of $\epsilon$-Fano type  with a Mori fibration structure such that $$\Vol(-K_X)\leq \Vol(-K_{X'}).$$
\end{thm}

We start with two lemmas. The first lemma is about equivalent definitions of $\epsilon$-Fano type.

\begin{lem}[{cf. \cite[Lemma-Definition 2.6]{PS}}]\label{FT=CY}
Let $Y$ be a projective  normal variety, and $\epsilon\in [0,1)$. The following are equivalent:
\begin{enumerate}
\item $Y$ is of $\epsilon$-Fano type;

\item There exists an effective $\mathbb{Q}$-divisor  $\Delta$ such that $\Delta$ is big, $(Y, \Delta)$ is $\epsilon$-klt, and $K_Y+\Delta\equiv 0$.
\end{enumerate}
\end{lem}
\begin{proof}
First we assume that  $Y$ is of $\epsilon$-Fano type, that is, there exists an effective $\mathbb{Q}$-divisor $B$ on $Y$ such that $(Y, B)$ is $\epsilon$-klt log Fano pair. Then take a general effective ample $\mathbb{Q}$-divisor $A$ on $Y$ such that $(Y, B+A)$ is $\epsilon$-klt and 
$$
K_Y+B+A\sim_{\mathbb{Q}}0.
$$
We may take $\Delta=A+B$.

Then we assume that there exists an effective $\mathbb{Q}$-divisor  $\Delta$ such that $\Delta$ is big, $(Y, \Delta)$ is $\epsilon$-klt, and $K_Y+\Delta\equiv 0$. Since $\Delta$ is big, we may write $\Delta=A+G$ where $A$ is an ample $\mathbb{Q}$-divisor and $G$ is an effective $\mathbb{Q}$-divisor. We may take a sufficiently small $\delta>0$ such that $(Y, \Delta+\delta G)$ is again $\epsilon$-klt. Hence $(Y, (1-\delta)\Delta+\delta G)$ is $\epsilon$-klt, and 
$$-(K_Y+ (1-\delta)\Delta+\delta G)\equiv \delta A$$
is ample. Hence $Y$ is of $\epsilon$-Fano type.
\end{proof}

Being of $\epsilon$-Fano type is preserved by MMP according to the following lemma.
\begin{lem}[{cf. \cite[Lemma 3.1]{GOST}}]\label{mmp lem}
Let $Y$ be a projective  normal variety and $f: Y\rightarrow Z$ be a projective birational morphism.  
\begin{enumerate}
\item If $Y$ is of $\epsilon$-Fano type, then so is $Z$;

\item Assume that $f$ is small, then $Y$ is of $\epsilon$-Fano type if and only if so is $Z$.
\end{enumerate}
In particular, minimal model program preserves $\epsilon$-Fano type.
\end{lem}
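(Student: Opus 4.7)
The plan is to transport the $\epsilon$-klt log Fano structure across $f$: pushing the boundary forward in part (1), pulling it back in part (2), and then using Kodaira's lemma together with a convex-combination manoeuvre to upgrade a resulting nef-and-big anti-log-canonical class to an ample one. The ``in particular'' clause is then immediate, as each step of an MMP is either a divisorial contraction (covered by (1)) or a flip, the latter factoring as two small birational contractions (covered by (2)).

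For part (1), pick an effective $\bQ$-divisor $\Delta_Y$ making $(Y,\Delta_Y)$ an $\epsilon$-klt log Fano pair. Choose a large integer $m$ and a general $H\in\frac{1}{m}|{-m(K_Y+\Delta_Y)}|$, so that $H\sim_\bQ-(K_Y+\Delta_Y)$, the coefficients of $H$ are bounded by $1/m$, $(Y,\Delta_Y+H)$ is still $\epsilon$-klt, and $K_Y+\Delta_Y+H\sim_\bQ 0$. Setting $\Delta_Z^0+H_Z:=f_*(\Delta_Y+H)$, the $f$-exceptional $\bQ$-Cartier divisor $K_Y+\Delta_Y+H-f^*(K_Z+\Delta_Z^0+H_Z)$ is $\bQ$-linearly trivial, so by the negativity lemma (in the $\bQ$-factorial setting supplied by the MMP) it must vanish. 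Consequently log discrepancies are preserved, so $(Z,\Delta_Z^0+H_Z)$ is $\epsilon$-klt, while $-(K_Z+\Delta_Z^0)\sim_\bQ H_Z=f_*H$ is effective, big (pushforward of big under a birational morphism), and nef (via the projection formula applied to curves on $Z$ using $\bQ$-factoriality). Kodaira's lemma then writes $H_Z\sim_\bQ A+E$ with $A$ ample and $E\geq 0$; setting $\Delta_Z:=\Delta_Z^0+\delta E$ for small $\delta\in(0,1)$ yields
$$
-(K_Z+\Delta_Z)\sim_\bQ H_Z-\delta E\sim_\bQ(1-\delta)H_Z+\delta A,
$$
a convex combination of a nef and an ample $\bQ$-divisor, hence ample. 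For $\delta$ small $(Z,\Delta_Z)$ remains $\epsilon$-klt by openness of the $\epsilon$-klt condition under small effective perturbations, so $Z$ is of $\epsilon$-Fano type.

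For part (2), only the ``if'' direction is nontrivial. Given $(Z,\Delta_Z)$ $\epsilon$-klt log Fano, set $\Delta_Y:=f^{-1}_*\Delta_Z$. Since $f$ is small there is no $f$-exceptional contribution, so $K_Y+\Delta_Y=f^*(K_Z+\Delta_Z)$ exactly, preserving all log discrepancies; thus $(Y,\Delta_Y)$ is $\epsilon$-klt, and $-(K_Y+\Delta_Y)=f^*(-(K_Z+\Delta_Z))$ is the pullback of an ample divisor by a small birational morphism, hence nef and big. The same Kodaira-plus-convex-combination argument as in (1) produces an effective $E'$ and a small $\delta$ so that $\Delta_Y+\delta E'$ is an $\epsilon$-klt log Fano boundary on $Y$.

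The main obstacle, as I anticipate, is the discrepancy-preservation step in (1): the negativity lemma must be invoked carefully on the $\bQ$-linearly trivial, $f$-exceptional divisor $K_Y+\Delta_Y+H-f^*(K_Z+\Delta_Z^0+H_Z)$, which relies on $\bQ$-factoriality. Once that is in hand, the Kodaira-perturbation upgrade from nef-and-big to ample via a convex combination with an ample $\bQ$-divisor is a routine manoeuvre, and the $\epsilon$-klt condition survives the small perturbation by continuity of log discrepancies.
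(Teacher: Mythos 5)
Your part (2) matches the paper, but your proof of part (1) has a genuine gap, and it is exactly the gap that the paper's proof is designed to avoid.

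The problem is $\bQ$-factoriality of $Z$. You take $H$ ample and effective on $Y$, push it forward, and need $H_Z=f_*H$ to be a nef and big $\bQ$-Cartier divisor on $Z$ in order to (a) make sense of $-(K_Z+\Delta^0_Z)\sim_\bQ H_Z$ and (b) apply Kodaira's lemma. Without $\bQ$-factoriality of $Z$ none of this works: $f_*H$ need not be $\bQ$-Cartier, so neither ``$H_Z$ is nef'' nor ``$K_Z+\Delta^0_Z$ is $\bQ$-Cartier'' is meaningful. You flag this dependence but wave it away as ``supplied by the MMP.'' It is not: to prove the ``in particular'' clause you must treat a flip $X_i\dashrightarrow X_{i+1}$ by factoring through the flipping contraction $X_i\to W\leftarrow X_{i+1}$, and the variety $W$ in the middle is precisely \emph{not} $\bQ$-factorial. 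So part (1), applied with $Z=W$, breaks. The same problem poisons the ``only if'' direction of (2), which you declare trivial; in fact it is not even possible to take $\Gamma_Z=f_*\Delta_Y$ for a small $f$, since if $K_Z+f_*\Delta_Y$ were $\bQ$-Cartier then $K_Y+\Delta_Y=f^*(K_Z+f_*\Delta_Y)$ would be $f$-trivial, contradicting ampleness of $-(K_Y+\Delta_Y)$ whenever $f$ is not an isomorphism.

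The paper's proof of (1) sidesteps all of this with a small but essential twist: instead of pushing an ample divisor from $Y$ down to $Z$, it \emph{pulls an ample Cartier divisor $H$ from $Z$ up to $Y$}. One first adds $\delta f^*H$ to $\Delta$ (for $\delta$ small enough to keep $\epsilon$-klt and anti-ample), then chooses a general ample $A\sim_\bQ -(K_Y+\Delta+\delta f^*H)$ on $Y$ so that the sum is $\sim_\bQ 0$. Pushing forward gives $K_Z+f_*\Delta+\delta H+f_*A\sim_\bQ 0$, which is automatically $\bQ$-Cartier, and the $f$-exceptional, $\bQ$-linearly trivial difference with its pullback vanishes, so discrepancies are preserved. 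Dropping the $\delta H$ term yields $-(K_Z+f_*\Delta+f_*A)\sim_\bQ \delta H$, which is $\bQ$-Cartier and ample because $H$ was a Cartier ample divisor on $Z$ from the start. No $\bQ$-factoriality, no nefness-of-a-pushforward argument, no Kodaira lemma needed in part (1). This choice of where to introduce the ample perturbation is the key idea you are missing.
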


\begin{proof}
First we assume that  $Y$ is of $\epsilon$-Fano type, that is, by Lemma \ref{FT=CY}, there exists an effective $\mathbb{Q}$-divisor  $\Delta$ such that $\Delta$ is big, $(Y, \Delta)$ is $\epsilon$-klt, and $K_Y+\Delta\equiv 0$. 
Pushing forward by $f$, by negativity lemma, 
$$
K_Y+\Delta=f^*(K_Z+f_*\Delta)\equiv 0.
$$
Hecne $f_*\Delta$ is big, $(Z,f_*\Delta)$ is $\epsilon$-klt, and $K_Z+f_*\Delta\equiv 0$, that is, $Z$ is of $\epsilon$-Fano type.

Next we assume that  $f$ is small and  $Z$ is of $\epsilon$-Fano type. Let $\Gamma$ be an effective big $\mathbb{Q}$-divisor on $Z$ such that $(Z, \Gamma)$ is $\epsilon$-klt and $K_Z+\Gamma\equiv 0$. Let $\Delta$ be the strict transform of $\Gamma$ on $Y$. Then $\Delta$ is big since $f$ is small. Again by $f$ is small, 
$$
K_Y+\Delta=f^*(K_Z+\Gamma).
$$
Hence $(Y, \Delta)$ is $\epsilon$-klt and $K_Y+\Delta\equiv 0$. Hence $Y$ is of $\epsilon$-Fano type. 
\end{proof}
\begin{proof}[Proof of Theorem \ref{thm MFS}]
Fix $0<\epsilon<1$ and an integer $n>0$. Let  $X$ be a variety of $\epsilon$-Fano type of dimension $n$, that is, by Lemma \ref{FT=CY}, there exists an effective $\mathbb{Q}$-divisor  $\Delta$ such that $\Delta$ is big, $(X, \Delta)$ is $\epsilon$-klt, and $K_X+\Delta\equiv 0$. 
By \cite[Corollary 1.4.3]{BCHM},  taking a $\bQ$-factorialization of $(X, \Delta)$, we have a birational morphism $\phi: X_0 \rightarrow X$ where
$K_{X_0}+\phi^{-1}_*\Delta=\phi^*(K_X+\Delta)$, $X_0$ is $\bQ$-factorial, and $\phi$ is isomorphic in codimension one. 

Again by \cite[Corollary 1.4.3]{BCHM},  taking a terminalization of $X_0$, 
we have a birational morphism $\pi: X_1 \rightarrow X_0$ where
$K_{X_1}+\Delta_{X_1}=\pi^*(K_{X_0}+\phi^{-1}_*\Delta)$, 
$\Delta_{X_1}\geq \pi^*(\phi^{-1}_*\Delta)$ is an effective $\bQ$-divisor, $X_1$ is $\bQ$-factorial and terminal.
Here $K_{X_1}+\Delta_{X_1}\equiv 0$ and $(X_1,\Delta_{X_1})$ is  $\epsilon$-klt. Since $\Delta$ is big and $\phi$ is small, $\Delta_{X_1}\geq \pi^*(\phi^{-1}_*\Delta)$ is big. Therefore, $X_1$ is $\bQ$-factorial terminal and of $\epsilon$-Fano type.

Running the $K$-MMP on $X_1$, we get a sequence of normal projective varieties: 
$$
X_1\dashrightarrow X_2 \dashrightarrow X_3\dashrightarrow \cdots\dashrightarrow X_r\rightarrow T. 
$$ 
Since $-K_{X_1}$ is big, this sequence ends with a Mori fiber space $X_r\rightarrow T$ (cf. \cite[Corollary 1.3.3]{BCHM}). Since we run the $K$-MMP, $X_r$ is again $\bQ$-factorial and terminal. 
By Lemma \ref{mmp lem}, for all $i$, $X_i$ is of $\epsilon$-Fano type. Now $X_r$ is an $n$-dimensional variety of  $\epsilon$-Fano type with a Mori fiber structure by construction, which is birational to $X$. 

The remaining thing is to compare the anti-canonical volumes. By definition, $$\Vol(-K_X)=\Vol(-K_{X_0}).$$ Since $X_1$ is a terminalization of $X_0$, we have $K_{X_1}+F=\pi^*K_{X_0}$ with $F$ an effective $\bQ$-divisor.  Hence $$\Vol(-K_{X_0})=\Vol(-(K_{X_1}+F))\leq \Vol(-K_{X_1}).$$
By Lemma \ref{vol lem} below, 
$$
\Vol(-K_{X_1})\leq \Vol(-K_{X_r}).
$$

Hence we may take $X'=X_r$ and complete the proof of Theorem \ref{thm MFS}. 
\end{proof}

\begin{lem}\label{vol lem}
Let $X_i\dashrightarrow X_{i+1}$ be one step of the $K$-MMP. Then
$$
{\rm Vol}(-K_{X_i})\leq {\rm Vol}(-K_{X_{i+1}}).
$$
\end{lem}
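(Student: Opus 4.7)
The plan is to split on the two possible types of MMP step and, in each case, reduce to an inequality of $\bQ$-Cartier $\bQ$-divisors on a common birational model. The two volume facts I will rely on are: $\Vol(\phi^*D)=\Vol(D)$ for any birational morphism $\phi$, and $\Vol(D_1)\leq \Vol(D_2)$ whenever $D_2-D_1$ is effective.

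If $X_i\to X_{i+1}$ is a \emph{divisorial contraction} $f$ of a $K_{X_i}$-negative extremal ray with exceptional prime divisor $E$, I will write
$$K_{X_i}=f^*K_{X_{i+1}}+a E,$$
and argue $a>0$ directly: for a curve $C$ contracted by $f$ we have $f^*K_{X_{i+1}}\cdot C=0$ and $E\cdot C<0$, while $-K_{X_i}$ is $f$-ample, so $a=-K_{X_i}\cdot C/(-E\cdot C)>0$. Rearranging gives $-K_{X_i}=f^*(-K_{X_{i+1}})-aE\leq f^*(-K_{X_{i+1}})$, and taking volumes yields the claim.

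If $X_i\dashrightarrow X_{i+1}$ is a \emph{flip}, with small contractions $f\colon X_i\to Z$ and $g\colon X_{i+1}\to Z$, I will choose a common log resolution $p\colon W\to X_i$, $q\colon W\to X_{i+1}$ and compare $p^*K_{X_i}$ with $q^*K_{X_{i+1}}$. The crucial input is the standard fact that discrepancies weakly increase under a flip: $a_F(X_{i+1})\geq a_F(X_i)$ for every prime divisor $F$ on $W$. Writing $K_W=p^*K_{X_i}+\sum a_j F_j=q^*K_{X_{i+1}}+\sum b_j F_j$ with $b_j\geq a_j$, this gives effectivity of $p^*K_{X_i}-q^*K_{X_{i+1}}=\sum(b_j-a_j)F_j$, equivalently $p^*(-K_{X_i})\leq q^*(-K_{X_{i+1}})$, and once more the volume inequality follows from the two properties above.

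The only substantive ingredients are the positivity of $a$ in the divisorial case and the discrepancy monotonicity under flips, both of which are classical consequences of the Negativity Lemma, so no real obstacle is anticipated; the proof is essentially a direct comparison of pullbacks on a common model.
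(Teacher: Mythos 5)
Your proof is correct, and the underlying mechanism (discrepancy monotonicity under an MMP step, reduced to an inequality of pulled-back canonical divisors on a common model, then the monotonicity of $\Vol$) is the same as the paper's. The only difference is organizational: the paper does not split into the divisorial and flip cases, but directly takes a common resolution $p\colon W\to X_i$, $q\colon W\to X_{i+1}$ and cites the standard fact that for any one step of $K$-MMP one has $p^*K_{X_i}=q^*K_{X_{i+1}}+E$ with $E\geq 0$ and $q$-exceptional, then computes exactly your chain $\Vol(-K_{X_i})=\Vol(-q^*K_{X_{i+1}}-E)\leq\Vol(-q^*K_{X_{i+1}})=\Vol(-K_{X_{i+1}})$. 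Your divisorial-case shortcut (writing $K_{X_i}=f^*K_{X_{i+1}}+aE$ with $a>0$ and avoiding a resolution) is a perfectly valid, slightly more self-contained variant for that case, since it derives $a>0$ from $-K_{X_i}$ being $f$-ample and $E\cdot C<0$ rather than quoting the general discrepancy comparison; the paper's unified statement buys a shorter proof at the cost of invoking the general lemma in both cases.
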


\begin{proof}
Take a common resolution $p:W \rightarrow X_i$, $q: W\rightarrow X_{i+1}$. Then
$$
p^*(K_{X_i})=q^*(K_{X_{i+1}})+E,
$$
where $E$ is an effective $q$-exceptional $\mathbb{Q}$-divisor. 
Hence
\begin{align*}
 {\rm Vol}(-K_{X_i}) 
={}&{\rm Vol}(-p^*(K_{X_i}))\\
={}&{\rm Vol}(-q^*(K_{X_{i+1}})-E)\\
\leq {}&{\rm Vol}(-q^*(K_{X_{i+1}}))\\
= {}&{\rm Vol}(-K_{X_{i+1}}).
\end{align*}
We proved the lemma.
\end{proof}

As a direct corollary, we recover the main result in \cite{J1} on the Weak BAB Conjecture in dimension two. 
\begin{cor}\label{2bab}
Fix $0<\epsilon<1$. 
Then there exists a  number 
$$M(2,\epsilon):=\max\Big\{9, \rounddown{2/\epsilon}+4+\frac{4}{\rounddown{2/\epsilon}}\Big\}$$
 with the following property: if  $X$ is a surface of $\epsilon$-klt  del Pezzo type, then 
$${\rm Vol}(-K_X) \leq M(2,\epsilon).$$
\end{cor}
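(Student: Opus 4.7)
The plan is to combine Theorem \ref{main thm mfs} with an explicit analysis of two-dimensional Mori fiber spaces. By Theorem \ref{main thm mfs} it suffices to bound $\Vol(-K_{X_r})$ when $X_r$ has a Mori fiber structure and is $\bQ$-factorial terminal. In dimension two $\bQ$-factorial terminal means smooth, so the classification of smooth surface Mori fiber spaces gives only two possibilities: either $\dim T = 0$, in which case $\rho(X_r) = 1$ forces $X_r \simeq \bP^2$; or $\dim T = 1$, in which case $T\simeq \bP^1$ (by rational connectedness, as noted in the paper) and $X_r$ is a smooth $\bP^1$-bundle over $\bP^1$, i.e.\ a Hirzebruch surface $\mathbb{F}_n$.

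For $X_r \simeq \bP^2$ we have $\Vol(-K_{X_r}) = 9$. The substantive case is $X_r \simeq \mathbb{F}_n$. First I would bound $n$ in terms of $\epsilon$ by exploiting $\epsilon$-Fano type. Take any $\epsilon$-klt log Fano boundary $\Delta$ on $\mathbb{F}_n$ and write $\Delta = a\sigma_n + \Delta'$, where $\sigma_n$ is not a component of $\Delta'$. Since $\mathbb{F}_n$ is smooth, the $\epsilon$-klt hypothesis forces $a < 1-\epsilon$. On the other hand, intersecting the ample divisor $-(K_{\mathbb{F}_n}+\Delta)$ with $\sigma_n$ (using $\sigma_n^2=-n$, $-K_{\mathbb{F}_n}\cdot\sigma_n = 2-n$, and $\Delta'\cdot\sigma_n\geq 0$) yields $a > 1-2/n$. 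Combining gives $n < 2/\epsilon$, hence $n\leq \rounddown{2/\epsilon}$.

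Next I would compute $\Vol(-K_{\mathbb{F}_n})$ by Zariski decomposition. If $n\leq 1$, $-K_{\mathbb{F}_n}$ is nef so $\Vol(-K_{\mathbb{F}_n}) = (-K_{\mathbb{F}_n})^2 = 8$. If $n\geq 2$, the unique negative curve $\sigma_n$ has $-K_{\mathbb{F}_n}\cdot\sigma_n<0$, so the Zariski decomposition has the form $-K_{\mathbb{F}_n} = P + b\sigma_n$ with $P\cdot\sigma_n = 0$; solving yields $b = 1-2/n$, $P = (1+2/n)\sigma_n + (n+2)f$, and
\begin{equation*}
\Vol(-K_{\mathbb{F}_n}) \;=\; P^2 \;=\; \frac{(n+2)^2}{n} \;=\; n + 4 + \frac{4}{n}.
\end{equation*}
Combining with Theorem \ref{main thm mfs} and the $\bP^2$ case,
\begin{equation*}
\Vol(-K_X)\leq \Vol(-K_{X_r})\leq \max\Big\{9,\; \rounddown{2/\epsilon}+4+\tfrac{4}{\rounddown{2/\epsilon}}\Big\} = M(2,\epsilon),
\end{equation*}
where I use that $n\mapsto n+4+4/n$ is increasing for $n\geq 2$ so its maximum over the admissible range is at the right endpoint.

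The main obstacle I expect is securing the bound $n\leq \rounddown{2/\epsilon}$; this hinges on the tight numerical interplay between the $\epsilon$-klt ceiling on the coefficient of $\sigma_n$ in $\Delta$ and the ampleness lower bound forced by $(-K_{\mathbb{F}_n}-\Delta)\cdot \sigma_n > 0$. Once this is in hand, the classification of smooth surface Mori fiber spaces, the Zariski decomposition on $\mathbb{F}_n$, and the final optimization are routine.
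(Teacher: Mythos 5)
Your proposal is correct and takes essentially the same route as the paper: reduce via Theorem \ref{main thm mfs} to a terminal (hence smooth) two-dimensional Mori fiber space, which must be $\bP^2$ or a Hirzebruch surface $\mathbb{F}_n$ with $n\leq 2/\epsilon$, and then compute the volume. You simply supply explicitly the two ingredients the paper delegates to citations, namely the bound on $n$ (which \cite[Lemma 1.4]{AM} and \cite[Lemma 3.1]{J1} obtain by the same intersection of $-(K_{\mathbb{F}_n}+\Delta)$ with $\sigma_n$) and the Zariski-decomposition computation $\Vol(-K_{\mathbb{F}_n})=n+4+4/n$ for $n\geq 2$.
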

\begin{proof}
By Theorem \ref{thm MFS}, we only need to consider the cases when $X=\mathbb{P}^2$ or $\mathbb{F}_n$ with $n\leq  2/\epsilon$ (see \cite[Lemma 1.4]{AM} or \cite[Lemma 3.1]{J1}). And the result follows from an easy computation of volumes. 
\end{proof}

\section{Generalized Ambro's conjecture in dimension two}\label{section ambro}
In this section, we prove the generalized the Ambro's conjecture in dimension two (Theorem \ref{gac2}).

Fix an $\epsilon$-klt weak log del Pezzo pair $(S,B)$ with $S$ smooth and a $\bQ$-divisor $G\sim_\bQ -(K_S+B)$ such that $G+B\geq 0$. Set $a:=\ulct(S,B;G)$. Since we work with $\bQ$-divisors, $a$ is a positive rational number. The problem is to bound $a$ from below. We may assume that $a<1$. Set $D=G+B\geq 0$. Then $(S, B+aG)=(S,(1-a)B+aD)$ is not klt.
Note that $D\sim_\bQ -K_S$.

By Base Point Free Theorem (cf. \cite[Theorem 3.3]{KM}),
$-(K_S+B)$ is semi-ample. Hence there exists an effective
$\bQ$-divisor $M$  such that
$K_S+B+M\sim_\bQ 0$ and $(S, B+M)$ is $\epsilon$-klt. 
For any birational morphism $f:S\rightarrow T$ between smooth surfaces, we have
\begin{align*}
K_S+B+M={}&f^*(K_T+f_*B+f_*M), \\ 
K_S+(1-a)(B+M)+aD={}&f^*(K_T+(1-a)(f_*B+f_*M)+af_*D).
\end{align*}
Hence $(T, f_*B+f_*M)$ is  $\epsilon$-klt and   $(T, (1-a)(f_*B+f_*M)+af_*D)$ is not klt with 
$$
K_T+f_*B+f_*M\sim_\bQ K_T+(1-a)(f_*B+f_*M)+af_*D\sim_\bQ 0.
$$
Recall that either $S\simeq \mathbb{P}^2$ or there exists a birational
morphism $g:S\rightarrow \mathbb{F}_n$ with $n\leq  2/\epsilon$ by \cite[Lemma 1.4]{AM} or \cite[Lemma 3.1]{J1}. 

Hence by replacing $S$ by $T=\bP^2$ or $\mathbb{F}_n$, we may assume  that there exists a triple $(T, B_T, D_T)$ satisfying the following conditions:
\begin{itemize}
\item[(i)] $T=\bP^2$ or $\mathbb{F}_n$ with $n\leq  2/\epsilon$;
\item[(ii)] $B_T, D_T$ are effective $\bQ$-divisors on $T$;
\item[(iii)] $(T, B_T)$ is $\epsilon$-klt and   $(T, (1-a)B_T+aD_T)$ is not klt; 
\item[(iv)] $K_T+B_T\sim_\bQ K_T+(1-a)B_T+aD_T\sim_\bQ 0$, equivalently, $B_T\sim_\bQ  D_T\sim_\bQ -K_T$.
\end{itemize}
Since $(T, (1-a)B_T+aD_T)$ is not klt, we may take a sequence of point blow-ups 
$$
T_{r+1}\rightarrow T_{r}\rightarrow \cdots \rightarrow T_{2}\rightarrow T_{1}=T
$$
where $T_{i+1}\rightarrow T_i$ is the blow-up at a non-klt center $P_i\in \Nklt(T_i, (1-a)B_i+aD_i+E_i)$ where $B_i$ and   $D_i$
are the strict transforms of $B_T$ and   $D_T$ respectively and 
$$
K_{T_i}+(1-a)B_i+aD_i+E_i=\pi_i^*(K_T+(1-a)B_T+aD_T),
$$
where $\pi_i:T_i\rightarrow T$ is the composition map and $E_i$ is a $\pi_i$-exceptional $\bQ$-divisor. 
We stop this process at $T_{r+1}$ if 
$$\dim\Nklt(T_{r+1}, (1-a)B_{r+1}+aD_{r+1}+E_{r+1})>0.$$
Since $P_i$ is a non-klt center of $(T_i, (1-a)B_i+aD_i+E_i)$, $\mult_{P_i}((1-a)B_i+aD_i+E_i)\geq 1$. Note that the coefficients of $E_i$ are $(\mult_{P_j}((1-a)B_j+aD_j+E_j)-1)$ for $j<i$, hence $E_i$ is effective for all $i$. Furthermore, we may assume that $\mult_{P_i}B_i$ is non-increasing. 
Take the integer $k\leq r$ such that $\mult_{P_i}B_i\geq \epsilon/2$ for $i\leq k$ and $\mult_{P_i}B_i<  \epsilon/2$ for $i> k$.
Write $B_T=\sum_j b_jB^j$ where $B^j$ are distinct prime divisors and $B_i=\sum_j b_jB_i^j$ where $B_i^j$ are the strict transforms of $B^j$. We have $b_j<1-\epsilon$ since  $(T, B_T)$ is $\epsilon$-klt. Recall that $\sum_j b_j\leq 4$ by Lemma \ref{multQ}.

\begin{claim}
If $\mult_{B^j}(aD_T)>\epsilon/2$ for some $j$, then $a\geq  \epsilon^2/({4+4\epsilon})$.
\end{claim}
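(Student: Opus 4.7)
The plan is to observe that the hypothesis $\mult_{B^j}(aD_T) > \epsilon/2$ is really a statement about the coefficient with which the prime divisor $B^j$ appears in $D_T$, and then to bound that coefficient from above by elementary intersection theory on $T$.

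First I would write $D_T = d\,B^j + D'$, where $d \geq 0$ is the coefficient of $B^j$ in $D_T$ and $D' \geq 0$ is the remaining effective part with $B^j \not\subset \mathrm{Supp}(D')$. Since $B^j$ is a prime divisor, $\mult_{B^j} D_T = d$, so the assumption becomes $a\,d > \epsilon/2$, i.e.\ $d > \epsilon/(2a)$. Thus it suffices to prove the upper bound $d \leq 2/\epsilon + 2$, for then
\[
a \;>\; \frac{\epsilon}{2d} \;\geq\; \frac{\epsilon}{2(2/\epsilon + 2)} \;=\; \frac{\epsilon^2}{4+4\epsilon}.
\]

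Next I would split on the two possibilities for $T$ given by (i). If $T = \mathbb{P}^2$, intersect $D_T \sim_{\mathbb{Q}} -K_T$ with a general line $L$: since $D' \cdot L \geq 0$ and $B^j \cdot L \geq 1$ for any curve in $\mathbb{P}^2$, we get $d \leq d\,(B^j \cdot L) \leq D_T \cdot L = 3 \leq 2/\epsilon + 2$. If $T = \mathbb{F}_n$ with $n \leq 2/\epsilon$, there are two subcases. If $B^j$ is a fiber of $\mathbb{F}_n \to \mathbb{P}^1$, then Lemma~\ref{multif} applied to $D_T$ yields $d = \mult_{B^j} D_T \leq n+2 \leq 2/\epsilon + 2$. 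Otherwise $B^j$ is horizontal, so intersecting with a general fiber $f$ (chosen to avoid the finitely many vertical components) gives $B^j \cdot f \geq 1$ and $D' \cdot f \geq 0$, so $d \leq D_T \cdot f = 2 \leq 2/\epsilon + 2$.

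Combining the two cases with the inequality displayed above yields $a > \epsilon^2/(4+4\epsilon)$, which implies the claimed bound $a \geq \epsilon^2/(4+4\epsilon)$. There is no real obstacle here: everything reduces to the elementary fact that in an effective anti-canonical divisor on $\mathbb{P}^2$ or $\mathbb{F}_n$, the coefficient of a prime component is controlled by its intersection with a moving test curve (a general line, or a general fiber), and the only slightly delicate point is to isolate the vertical case on $\mathbb{F}_n$, which is already handled by Lemma~\ref{multif}.
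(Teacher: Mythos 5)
Your proof is correct and follows essentially the same route as the paper: both reduce to bounding $\mult_{B^j}D_T$ from above, split on $T=\mathbb{P}^2$ versus $T=\mathbb{F}_n$ (with the fiber/non-fiber subcase), invoke Lemma~\ref{multif} in the fiber case, and use the test-curve intersection ($D_T\cdot L = 3$ or $D_T\cdot f = 2$) in the remaining cases. The only difference is cosmetic: you spell out the decomposition $D_T = dB^j + D'$ explicitly, whereas the paper states the multiplicity bounds directly.
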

\begin{proof}
Recall that $T=\bP^2$ or $\mathbb{F}_n$ with $n\leq  2/\epsilon$.

If $T=\bP^2$, then  $\mult_{B^j}D_T\leq 3$ by degree counting.
If $T= \mathbb{F}_n$ and $B^j$ is a fiber, then $\mult_{B^j}D_T\leq n+2\leq  2/{\epsilon}+2$ by Lemma \ref{multif}. 
If $T= \mathbb{F}_n$ and $B^j$ is not a fiber, then $\mult_{B^j}D_T\leq D_T\cdot f=2$ where $f$ is a fiber.
Hence 
$$
a\geq  \frac{\epsilon}{2\mult_{B^j}D_T}\geq \frac{\epsilon^2}{4+4\epsilon}.
$$
We proved the claim.
\end{proof}

Since we need a lower bound of $a$, from now on, by Claim 1, we may assume that  $\mult_{B^j}(aD_T)\leq \epsilon/{2}$ for all $j$. In particular, $\mult_{B_i^j}(aD_i)\leq  \epsilon/2$ and  $$\mult_{B_i^j}((1-a)B_i+aD_i) <  1-\epsilon/2$$   for all $i$ and $j$.

\begin{claim}
$(B_{k+1}^j)^2\geq -{4}/{\epsilon}$ for all $j$.
\end{claim}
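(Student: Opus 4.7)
The plan is to prove this by adjunction on the smooth surface $T_{k+1}$, combined with the linear equivalence
$$-K_{T_{k+1}} \sim_\bQ (1-a) B_{k+1} + a D_{k+1} + E_{k+1},$$
in which all three terms on the right are effective (for $E_{k+1}$ this was observed earlier in the paper; $D_{k+1}$ is the strict transform of the effective $\bQ$-divisor $D_T$). Notice that the result is actually valid for every $i$, not only for $i=k+1$, since the argument never uses the special multiplicity condition at the blown-up points; the content of the claim is really a statement about effective curves on these iterated blow-ups.

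First I would apply adjunction to the irreducible reduced curve $B_{k+1}^j$ on the smooth surface $T_{k+1}$:
$$(K_{T_{k+1}}+B_{k+1}^j)\cdot B_{k+1}^j = 2p_a(B_{k+1}^j)-2 \geq -2,$$
using that the arithmetic genus of an irreducible reduced curve on a smooth surface is non-negative.

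Next I intersect the linear equivalence above with $B_{k+1}^j$. For each of the three effective terms, I split off the contribution along $B_{k+1}^j$. The coefficient of $B_{k+1}^j$ in $B_{k+1}$ is $b_j$; in $D_{k+1}$ it equals $\mult_{B^j}(D_T)$, since strict transforms preserve coefficients along non-exceptional components; and in $E_{k+1}$ it is zero, since $E_{k+1}$ is $\pi_{k+1}$-exceptional. All remaining components of these three effective divisors are distinct irreducible curves from $B_{k+1}^j$ on the smooth surface $T_{k+1}$, and so intersect $B_{k+1}^j$ non-negatively. Setting $c := (1-a)b_j + a\,\mult_{B^j}(D_T)$, I therefore obtain
$$-K_{T_{k+1}}\cdot B_{k+1}^j \geq c\, (B_{k+1}^j)^2.$$
Combining this with the adjunction inequality yields $(1-c)(B_{k+1}^j)^2 \geq -2$.

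Finally, I use the two hypotheses: $(T,B_T)$ is $\epsilon$-klt, so $b_j < 1-\epsilon$; and we are in the running case $a\,\mult_{B^j}(D_T)\leq \epsilon/2$. Hence
$$1-c > 1 - (1-a)(1-\epsilon) - \epsilon/2 = a(1-\epsilon) + \epsilon/2 \geq \epsilon/2,$$
and dividing through gives $(B_{k+1}^j)^2 > -4/\epsilon$, as claimed. The only subtle point in this argument is verifying the effectivity of all three pieces of $-K_{T_{k+1}}$ and the non-negativity of the residual intersections, but both reduce to the fact that $T_{k+1}$ is smooth and the three divisors are effective, with $E_{k+1}$ exceptional; beyond that the proof is essentially a two-line adjunction/intersection computation.
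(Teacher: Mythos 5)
Your proof is correct and takes essentially the same route as the paper's: adjunction on the smooth surface $T_{k+1}$, the relation $K_{T_{k+1}}+(1-a)B_{k+1}+aD_{k+1}+E_{k+1}\sim_\bQ 0$ pulled back from $T$, non-negativity of intersections with the residual effective components, and the coefficient bound $c<1-\epsilon/2$ (which you re-derive from $b_j<1-\epsilon$ and $a\,\mult_{B^j}D_T\le\epsilon/2$, whereas the paper quotes it directly from the paragraph preceding the claim). The paper phrases the same computation slightly differently by splitting off $\frac{\epsilon}{2}(B_{k+1}^j)^2$ at the outset, but the content is identical; your observation that the argument works for all $i$, not just $i=k+1$, is also correct.
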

\begin{proof}
If $(B_{k+1}^j)^2<0$, then
\begin{align*}
-2\leq{}&
2p_a(B_{k+1}^j)-2=(K_{T_{k+1}}+B_{k+1}^j)\cdot B_{k+1}^j\\
={}&\frac{\epsilon}{2} (B_{k+1}^j)^2+\Big(K_{T_{k+1}}+\Big(1-\frac{\epsilon}{2}\Big)B_{k+1}^j\Big)\cdot B_{k+1}^j\\
\leq{}&\frac{\epsilon}{2} (B_{k+1}^j)^2+(K_{T_{k+1}}+ (1-a)B_{k+1}+aD_{k+1}+E_{k+1})\cdot B_{k+1}^j\\
={}&\frac{\epsilon}{2} (B_{k+1}^j)^2<0,
\end{align*}
Hence we proved the claim.
\end{proof}

Now we can bound the number $k$.  On $T_{k+1}$, we have
\begin{align*}
(B_{k+1})^2={}&(\sum_j b_jB_{k+1}^j)^2\geq \sum_j b_j^2(B_{k+1}^j)^2\geq (\sum_j b_j^2)(-4/\epsilon)\\
\geq{}&  (\sum_j b_j)(1-\epsilon)(-4/\epsilon)\geq 16-\frac{16}{\epsilon}
\end{align*}
and $(B_1)^2=(K_T)^2\leq 9$.
On the other hand, after each blow-up, $(B_i)^2$ decreases by at least $\epsilon^2/4$ by the assumption $\mult_{P_i}B_i\geq \epsilon/2$ for $i\leq k$. Hence 
$$
{k}\leq \frac{9-(16-16/\epsilon)}{\epsilon^2/4}\leq \frac{64}{\epsilon^3}.
$$

Now we consider $\pi_{k+1}^*(aD_T)$ on $T_{k+1}$. 
\begin{claim}\label{QQQ}There exists a point $Q$ on $T_{k+1}$ such that  $\mult_Q\pi_{k+1}^*(aD_T)\geq \epsilon/4$.
\end{claim}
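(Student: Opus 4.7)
The plan is to reduce to an application of Lemma~\ref{multi} by rewriting the non-klt pair on $T_{k+1}$ so that $\pi_{k+1}^*(aD_T)$ plays the role of the ``$B$'' in that lemma. Let $F := K_{T_{k+1}} - \pi_{k+1}^*K_T$ be the relative canonical divisor and write $\pi_{k+1}^* B_T = B_{k+1} + B'$ with $B'$ exceptional. Setting
\[
B := \pi_{k+1}^*(aD_T), \qquad D := (1-a)\pi_{k+1}^*B_T - F,
\]
the defining identity for $E_{k+1}$ rearranges to $(1-a)B_{k+1} + aD_{k+1} + E_{k+1} = B + D$, so $(T_{k+1}, B+D)$ retains the non-klt center $Z$ already produced on $T_{k+1}$ (the point $P_{k+1}$ if $k+1 \leq r$, or a positive-dimensional center if $k = r$).

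The key observation is that every coefficient of $D$ is strictly less than $1-\epsilon$. On the strict transform of a component $B^j$ of $B_T$ the coefficient is $(1-a) b_j \leq b_j < 1-\epsilon$ by $\epsilon$-kltness of $(T, B_T)$. On a $\pi_{k+1}$-exceptional prime divisor $E$ on $T_{k+1}$, set $b_E := \mult_E \pi_{k+1}^*B_T$ and $c_E := \mult_E F$; the coefficient of $D$ at $E$ is $(1-a) b_E - c_E \leq b_E - c_E$, and $\epsilon$-kltness of $(T,B_T)$ applied to the valuation $E$ gives $1 + c_E - b_E > \epsilon$, whence $b_E - c_E < 1 - \epsilon$ as required. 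If the support of $D$ were simple normal crossing on $T_{k+1}$, Lemma~\ref{multi} with $e = 1-\epsilon$ would immediately yield $\mult_Z \pi_{k+1}^*(aD_T) > \epsilon$, stronger than what is claimed.

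In general the strict transforms of the $B^j$ need not be SNC on $T_{k+1}$, so I would pass to a log resolution $\sigma \colon T^+ \to T_{k+1}$ of $\operatorname{Supp}(B_{k+1})$ together with the $\pi_{k+1}$-exceptional configuration, transfer the pair to $(T^+, B^+ + D^+)$ via the same formulae, and verify the same coefficient bound on every $\sigma$-exceptional prime divisor by applying $\epsilon$-kltness of $(T, B_T)$ to the corresponding valuation over $T$. Lemma~\ref{multi} on $T^+$ then produces a non-klt subvariety $Z^+$ with $\mult_{Z^+}B^+ > \epsilon$, from which one extracts a suitable point $Q$ on $T_{k+1}$.

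The main obstacle I anticipate is the pushdown step from $T^+$ to $T_{k+1}$: for a preimage $Q^+$ of $Q$ on $T^+$, only the inequality $\mult_{Q^+}B^+ \geq \mult_Q B$ holds in general, which points the wrong way for a direct conclusion. Choosing $Q^+$ as a generic point of a $\sigma$-exceptional prime divisor $E^\sigma$ above $Q$, so that $\mult_{Q^+} B^+$ equals the valuation $v_{E^\sigma}(B)$, and exploiting that at most two exceptional components meet at any smooth surface point, should allow recovery of $\mult_Q \pi_{k+1}^*(aD_T) \geq \epsilon/4$; the factor of $1/4$ is conservative and absorbs this combinatorial loss. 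As a cross-check, a direct approach on $T_{k+1}$ without log resolution handles the cases where $P_{k+1}$ lies on zero or one $\pi_{k+1}$-exceptional divisor by the elementary bound $\mult_{P_{k+1}}\pi_{k+1}^*(aD_T) > 1-\epsilon/2 + \mult_{P_{k+1}}F - (1-a)\mult_{P_{k+1}}B' \geq \epsilon/2$, and degenerates precisely when $P_{k+1}$ lies at the intersection of two exceptional divisors, which is where the log resolution passage is essential.
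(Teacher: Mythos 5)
Your decomposition $B:=\pi_{k+1}^*(aD_T)$, $D:=(1-a)\pi_{k+1}^*B_T-F$ and the coefficient bound $\mult_E D<1-\epsilon$ on every prime divisor are both correct, and this is a genuinely different route from the paper's. You put $\pi_{k+1}^*(aD_T)$ in the ``$B$'' slot of Lemma~\ref{multi} and everything else---including the uncontrolled strict transform $B_{k+1}$---in the ``$D$'' slot, which forces you to resolve $\operatorname{Supp}(B_{k+1})$. The paper instead puts $(1-a)B_{k+1}+aD_{k+1}$ in the ``$B$'' slot and only $E_{k+1}$ in the ``$D$'' slot; since $E_{k+1}$ is exceptional for a chain of point blow-ups of a smooth surface it is automatically SNC, so no extra resolution is needed, at the cost of a two-case split: if some component of $E_{k+1}$ has coefficient $\geq 1-3\epsilon/4$ one compares directly with the crepant pullback of $(T,(1-a)B_T)$, and otherwise Lemma~\ref{multi} together with $\mult_{P_{k+1}}B_{k+1}<\epsilon/2$ finishes.

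The pushdown gap you anticipate is real, and the proposed fix does not close it. When $Z^+$ is $\sigma$-exceptional over $Q\in T_{k+1}$, what you obtain is $v_{Z^+}\bigl(\pi_{k+1}^*(aD_T)\bigr)>\epsilon$, but the ratio $v_{Z^+}(\Gamma)/\mult_Q\Gamma$ for a fixed $\Gamma$ grows without bound with the depth of $Z^+$ in the tower $\sigma$ (already along an infinitely-near chain of free/satellite points lying on a component of $\Gamma$, the valuation grows linearly while $\mult_Q\Gamma$ stays constant), and the length of $\sigma$---a log resolution of $\operatorname{Supp}(B_{k+1})$---is not bounded in terms of $\epsilon$, since $B_T$ is an arbitrary effective $\mathbb{Q}$-divisor with no control on the singularities of its support. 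So the ``at most two components through a point'' observation and a reserve factor of $1/4$ cannot absorb the loss. This is exactly why the paper establishes the bound $k\leq 64/\epsilon^3$ on the number of blow-ups \emph{before} invoking Lemma~\ref{multi}: the subsequent Claim~\ref{FFF} needs the chain $T_{k+1}\to T$ to have bounded length for the Fibonacci estimate to give a uniform constant, and inserting an uncontrolled $\sigma$ into the middle of that chain destroys precisely this structure. To salvage your route you would either have to stay on $T_{k+1}$ (which forces a case split much like the paper's to deal with the two-exceptional-curve configuration directly) or bound the length of $\sigma$ by $\epsilon$ alone, which is impossible.
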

\begin{proof}
Consider the pair $(T_{k+1}, (1-a)B_{k+1}+aD_{k+1}+E_{k+1})$.  Note that $E_{k+1}$ is simple normal crossing supported.

Suppose that there exists a curve $E$ with coefficient at least $1-3\epsilon/4$ in $E_{k+1}$, that is, $$\mult_E(K_{T_{k+1}}-\pi_{k+1}^*(K_T+(1-a)B_T+aD_T))\leq -1+3\epsilon/4.$$
 On the other hand, since $(T, (1-a)B_T)$ is $\epsilon$-klt, 
$$\mult_E(K_{T_{k+1}}-\pi_{k+1}^*(K_T+(1-a)B_T))> -1+\epsilon.$$
Hence $\mult_E\pi_{k+1}^*(aD_T)\geq  {\epsilon}/{4}$.

Then we may assume that all coefficients of $E_{k+1}$ are smaller than $1-3\epsilon/4$,  then $k<r$ and $P_{k+1}$ is a non-klt center of  $(T_{k+1}, (1-a)B_{k+1}+aD_{k+1}+E_{k+1})$. 
By Lemma \ref{multi}, $\mult_{P_{k+1}}((1-a)B_{k+1}+aD_{k+1}) \geq 3\epsilon/4$. Then $\mult_{P_{k+1}}(aD_{k+1}) \geq \epsilon/4$  since $\mult_{P_{k+1}}B_{k+1} < \epsilon/2$ by definition of $k$. In particular, $\mult_{P_{k+1}}\pi_{k+1}^*(aD_T)\geq \mult_{P_{k+1}}(aD_{k+1}) \geq \epsilon/4$.

We proved the claim.
\end{proof}
Now we will estimate $\mult_{Q_1} (aD_T)$ where $Q_1$ is the image of $Q$ on $T$. By removing unnecessary blow-ups, we may assume that we have a sequence of blow-ups
$$
T_{k+1}\rightarrow T_{k}\rightarrow \cdots \rightarrow T_{2}\rightarrow T_{1}=T
$$
where $f_{i+1}: T_{i+1}\rightarrow T_i$ is the blow-up at  $Q_i$ which is the image of $Q$ on $T_i$ with $k\leq 64/\epsilon^3$. Recall that $\pi_i:T_i\rightarrow T$ is the composition map and $D_i$ is the strict transform of $D_T$ on $T_i$. Denote $C_{i+1}$ to be the exceptional divisor of $f_{i+1}$ and  $C_{i+1}^j$ be its strict transform on $T_{j}$ for $j\geq i+1$. We can write
$$
\pi_j^*(aD_T)=aD_j+\sum_{2\leq i\leq j}c_iC_i^j,
$$
with $c_i=\mult_{Q_{i-1}} \pi_{i-1}^*(aD_T)$.
\begin{claim}\label{FFF}
If $\mult_{Q_1} (aD_T)\leq \alpha$, then $\mult_{Q_i} \pi_i^*(aD_T)\leq  ({\mathsf F}_{i+1}-1)\alpha$ for $1\leq i\leq k+1$. Here ${\mathsf F}_n$ is the Fibonacci number with relation ${\mathsf F}_n={\mathsf F}_{n-1}+{\mathsf F}_{n-2}$ for all $n\geq 2$ and  ${\mathsf F}_0= {\mathsf F}_1=1$.
\end{claim}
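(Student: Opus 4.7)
The plan is to prove the recursive inequality
\begin{equation*}
\mu_i \;\le\; \mu_{i-1} + \mu_{i-2} + \alpha,
\end{equation*}
where I write $\mu_i := \mult_{Q_i}\pi_i^*(aD_T) = c_{i+1}$ and set $\mu_0 := 0$. Since $\mathsf{F}_{i+1}-1 = (\mathsf{F}_i-1)+(\mathsf{F}_{i-1}-1)+1$, this recursion together with the base case $\mu_1 = \mult_{Q_1}(aD_T)\le \alpha = (\mathsf{F}_2-1)\alpha$, supplied by hypothesis, immediately yields the claim by induction on $i$.

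First I would establish that the sequence $(\mu_j)$ is non-decreasing. For any effective $\bQ$-divisor $F$ on $T_j$, the pullback formula $f_{j+1}^*F = \widetilde{F} + (\mult_{Q_j}F)\,C_{j+1}$, combined with $Q_{j+1}\in C_{j+1}$ and the smoothness of $C_{j+1}$, gives $\mult_{Q_{j+1}}(f_{j+1}^*F) \ge \mult_{Q_j}F$; applying this with $F = \pi_j^*(aD_T)$ yields $\mu_{j+1}\ge \mu_j$.

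Next, reading multiplicities at $Q_i$ off the decomposition $\pi_i^*(aD_T) = aD_i + \sum_{j=2}^i \mu_{j-1} C_j^i$, I obtain
\begin{equation*}
\mu_i \;=\; a\,\mult_{Q_i}D_i \;+\; \sum_{\substack{2\le j\le i\\ Q_i\in C_j^i}} \mu_{j-1}.
\end{equation*}
The exceptional configuration $\bigcup_{j=2}^{i} C_j^i$ of the iterated point blow-up $\pi_i : T_i \to T$ is a tree of smooth rational curves with simple normal crossings, so at most two components meet at $Q_i$. One of them is always $C_i^i = C_i$ itself (because $Q_i$ lies over $Q_{i-1}$ and $C_i = f_i^{-1}(Q_{i-1})$), contributing $\mu_{i-1}$; the other, if present, is some $C_{j_0}^i$ with $j_0<i$, contributing $\mu_{j_0-1}\le \mu_{i-2}$ by monotonicity. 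Finally, a standard local computation on the blow-up of a point in a smooth surface gives $\mult_{Q_i}D_i \le \mult_{Q_{i-1}}D_{i-1}$, so iterating produces $a\,\mult_{Q_i}D_i \le \mult_{Q_1}(aD_T)\le \alpha$. Summing these three bounds yields the recursion.

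The only nontrivial ingredient is the at-most-two-components fact for the exceptional configuration, which is precisely what forces Fibonacci rather than exponential growth; once it is in place the rest is a formal pullback calculation and an induction.
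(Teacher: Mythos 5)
Your proof is correct and takes essentially the same approach as the paper: induction on $i$, driven by the fact that $Q_i$ lies on at most two exceptional curves (which forces the Fibonacci recursion) together with the non-increase of $\mult_{Q_i}D_i$ under point blow-ups. Your reformulation via the explicit recursion $\mu_i \leq \mu_{i-1}+\mu_{i-2}+\alpha$ and the auxiliary monotonicity of $\mu_j$ is a minor streamlining of the paper's inductive step, which instead bounds the coefficients of the two exceptional curves through $Q_i$ directly from the induction hypothesis and the monotonicity of the Fibonacci numbers.
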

\begin{proof}
We run induction on $i$. The case $i=1$ is trivial. Assume the conclusion holds for $i<j$, then noting that
$Q_j$ is contained in at most two exceptional curves, we have
\begin{align*}
\mult_{Q_j} \pi_j^*(aD_T)={}&\mult_{Q_j}(aD_j+\sum_{2\leq i\leq j}c_iC_i^j)\\
\leq {}& \mult_{Q_j}(aD_j)+({\mathsf F}_{j}-1)\alpha+({\mathsf F}_{j-1}-1)\alpha\\
\leq {}& \mult_{Q_1}(aD_1)+({\mathsf F}_{j}-1)\alpha+({\mathsf F}_{j-1}-1)\alpha\\
\leq {}& ({\mathsf F}_{j+1}-1)\alpha.
\end{align*}
We proved the claim.
\end{proof}
By Claims \ref{QQQ} and \ref{FFF}, $\mult_{Q_1}(aD_T)\geq \epsilon/(4{\mathsf F}_{k+2}-4)$.
Recall that $D_T\sim_\bQ -K_T$ and $T=\bP^2$ or $\mathbb{F}_n$ with $n\leq 2/\epsilon$. 
By Lemma \ref{multQ}, $\mult_{Q_1}(D_T)\leq n+4$, combining with the inequality $k\leq 64/\epsilon^3$, we have
$$
a \geq  \frac{\epsilon^2}{(2+4\epsilon)(4{\mathsf F}_{\rounddown{64/\epsilon^3}+2}-4)},
$$
and hence we may take this number to be $\mu(2,\epsilon)$. 

We have proved Theorem \ref{gac2}.

\section{Weak BAB  Conjecture for Mori fiber spaces in dimension three}\label{section mfs}
In this section, we prove the Weak BAB Conjecture for Mori fiber spaces in dimension $3$ (Theorem \ref{mfs3}).
Recall that by a Mori fiber space we always mean a $\bQ$-factorial terminal one.

Fix $0<\epsilon<1$ and consider an $\epsilon$-klt log  Fano pair $(X,\Delta)$ of dimension $3$ with a Mori fiber structure. As explained, there are three cases:
\begin{itemize}
\item[(1)]  $X$ is a  $\mathbb{Q}$-factorial terminal $\mathbb{Q}$-Fano  $3$-folds with $\rho=1$;
\item[(2)]  $X\rightarrow  \mathbb{P}^1$ is a  del Pezzo fibration;
\item[(3)]  $X\rightarrow S$ is a {conic bundle}.
\end{itemize}
As mentioned before, Case (1) is done by Kawamata \cite{K}. We treat Cases (2) and (3) in the following two subsections, see Corollary \ref{cor dp} and Theorem \ref{thm conic}.
\subsection{Contractions to a curve}\label{curve section}
In this subsection, we treat the case under a more general setting when there is a contraction $f:X\rightarrow \bP^1$ (e.g. $X$ has a del Pezzo fibration structure).  
\begin{thm}
Let $(Y, B)$ be an $\epsilon$-klt log Fano pair of dimension $n$ with a contraction $g:Y\rightarrow \bP^1$ and $Y$ having terminal singularities. Assume that the Weak BAB Conjecture  and the generalized Ambro's conjecture hold in dimension $n-1$ with $M(n-1,\epsilon)$ and $\mu(n-1,\epsilon)$ the numbers defined in these conjectures. Then $$\Vol(-K_Y)\leq \frac{2nM(n-1,\epsilon)}{\mu(n-1,\epsilon)}.$$ 
\end{thm}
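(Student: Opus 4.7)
My plan is to bound $\Vol(-K_Y)$ by reducing to intersection-theoretic data on a general fibre and then invoking the inductive hypotheses in dimension $n{-}1$.

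First I pick a general closed point $p\in\mathbb{P}^1$ and set $F:=g^{-1}(p)$.  Since $Y$ has terminal singularities and $g$ is a projective contraction to a smooth curve, $F$ is a normal projective variety of dimension $n{-}1$ with $K_F=K_Y|_F$ (as $F|_F\equiv 0$ numerically).  Bertini, together with the genericity of $p$, shows that $B_F:=B|_F$ is a well-defined effective $\mathbb{Q}$-divisor making $(F,B_F)$ an $\epsilon$-klt log Fano pair of dimension $n{-}1$.  Writing $M:=M(n{-}1,\epsilon)$ and $\mu:=\mu(n{-}1,\epsilon)$, the hypotheses then give
\[
\Vol(-K_F)\leq M,
\]
and moreover $(F,B_F+\mu G)$ is lc for every $\mathbb{Q}$-divisor $G\sim_\mathbb{Q}-(K_F+B_F)$ with $G+B_F\geq 0$.

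Next I slice the volume along the fibration.  Introduce
\[
\tau:=\sup\{t\in\mathbb{Q}_{\geq 0}\mid -K_Y-tF\in\overline{\mathrm{Eff}}(Y)\},
\]
which is finite and positive since $-K_Y$ is big and $F$ is numerically nontrivial.  For every sufficiently divisible $m$ and $k\geq 0$ the short exact sequence
\[
0\to \mathcal{O}_Y(-mK_Y-(k+1)F)\to \mathcal{O}_Y(-mK_Y-kF)\to \mathcal{O}_F(-mK_F)\to 0
\]
is exact, and the restriction term is independent of $k$ because $F|_F\equiv 0$.  Taking sections gives
\[
h^0(Y,-mK_Y-kF)\leq h^0(Y,-mK_Y-(k+1)F)+h^0(F,-mK_F).
\]
Iterating over $k=0,1,\dots,\lceil m\tau\rceil$ and using $h^0(Y,-mK_Y-kF)=0$ whenever $k/m>\tau$ yields $h^0(Y,-mK_Y)\leq (\lceil m\tau\rceil+1)h^0(F,-mK_F)$.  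Dividing by $m^n/n!$ and sending $m\to\infty$ produces the key inequality
\[
\Vol(-K_Y)\leq n\tau \Vol(-K_F)\leq n\tau M.
\]

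The final, and most delicate, task is to show $\tau\leq 2/\mu$.  I would decompose $-K_Y=L+B$ with $L=-(K_Y+B)$ ample, and for a rational $s$ slightly less than $\tau$ choose an effective $E\sim_\mathbb{Q}-K_Y-sF$.  Restricting $E$ to a general fibre $F'$ not in the support of $E$ nor of the vertical part of $B$, the divisor $E|_{F'}\sim_\mathbb{Q}-K_{F'}$ is effective, so $G_{F'}:=E|_{F'}-B_{F'}$ satisfies $G_{F'}\sim_\mathbb{Q}-(K_{F'}+B_{F'})$ and $G_{F'}+B_{F'}=E|_{F'}\geq 0$.  Generalized Ambro$_{n-1}$ applied to $(F',B_{F'})$ then says that $(F',(1-\mu)B_{F'}+\mu E|_{F'})$ is lc, and Kawakita's inversion of adjunction lifts this to lc-ness of $(Y,F'+(1-\mu)B+\mu E)$ near $F'$.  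Comparing this lc pair with the pair $(Y,B+\mu(E+sF'))$, whose coefficient along $F'$ equals $\mu s$, and using the $\epsilon$-klt bound on the coefficients of $B$, I extract the desired inequality $s\leq 2/\mu$, and hence $\tau\leq 2/\mu$ by letting $s\to\tau$.  The factor of $2$ arises naturally as the sum of two contributions to $\tau$: one of size $1/\mu$ from the pseudo-effective threshold of $L-tF$ (controlled by generalized Ambro), and one of size at most $1$ from the total fibre-coefficient of $B$ (controlled by the $\epsilon$-klt-ness).  Combining these two bounds yields $\Vol(-K_Y)\leq n\tau M\leq 2nM/\mu$, as required.

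The main obstacle is this last step: executing the comparison of log canonical pairs to squeeze out $\tau\leq 2/\mu$ requires a careful use of inversion of adjunction together with the decomposition $-K_Y=L+B$, and one has to accommodate the possibility that $-K_Y$ itself is only big rather than nef, which is precisely why the slicing in the previous paragraph is carried out via section-counting rather than a direct intersection-number computation.
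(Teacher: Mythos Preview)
Your section-counting argument giving $\Vol(-K_Y)\leq n\tau\,\Vol(-K_F)\leq n\tau M$ is fine and is essentially the same estimate the paper proves (the paper phrases it contrapositively: if $s<\Vol(-K_Y)/(nM)$ then $-K_Y-sF$ is $\mathbb{Q}$-effective). The real gap is in your bound $\tau\leq 2/\mu$.

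The problem is that restricting $E\sim_{\mathbb Q}-K_Y-sF$ to a general fibre $F'$ kills $s$ entirely: since $F|_{F'}\equiv 0$ you get $E|_{F'}\sim_{\mathbb Q}-K_{F'}$ regardless of how large $s$ is. Generalized Ambro on $F'$ then only tells you that $(F',(1-\mu)B_{F'}+\mu E|_{F'})$ is \emph{lc}; this is a statement that the restriction is not too singular, which is the wrong direction to constrain $s$. Your inversion-of-adjunction step lifts this lc-ness back to $(Y,F'+(1-\mu)B+\mu E)$ near $F'$, but $s$ does not appear in that pair either, so no bound on $s$ can come out. The subsequent ``comparison'' with $(Y,B+\mu(E+sF'))$ is not a comparison of the same pair: knowing one pair is lc near $F'$ says nothing about whether a different pair with larger coefficient along $F'$ is lc. Likewise your heuristic splitting $\tau\leq \tau_L+1$ with $\tau_L$ the threshold of $L-tF$ runs into the identical issue: restricting to a fibre forgets $t$.

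What is missing is a mechanism that forces a non-klt centre \emph{dominating} $\mathbb{P}^1$, so that restriction to a general fibre is genuinely \emph{non}-klt and generalized Ambro can be applied in the useful direction. The paper does this with the Connectedness Lemma: from $-K_Y-sF$ effective one builds, for two general fibres $F_1,F_2$, divisors $B_{F_i}\sim_{\mathbb Q}-\tfrac{1}{s}K_Y$ with $B_{F_i}\geq F_i$; then $F_1\cup F_2\subset \Nklt\big(Y,(1-\tfrac{2}{s})B+B_{F_1}+B_{F_2}\big)$ and the log anti-canonical of this pair is $(1-\tfrac{2}{s})$ times $-(K_Y+B)$, hence ample. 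Connectedness forces a non-klt centre $W$ meeting both $F_i$, so $W$ dominates $\mathbb{P}^1$. Restricting to a third general fibre $F$ now gives a pair $(F,(1-\tfrac{2}{s})B|_F+B'|_F)$ that is \emph{not} klt, with $B'|_F\sim_{\mathbb Q}-\tfrac{2}{s}K_F$; this is exactly the input for generalized Ambro, yielding $\tfrac{2}{s}\geq\mu$, i.e.\ $s\leq 2/\mu$. The factor $2$ comes from using two fibres, which is what makes the log anti-canonical stay ample while simultaneously forcing disconnected non-klt loci if there were no horizontal centre.
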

\begin{proof}
Note that $Y$ is  terminal by assumption.  Hence a general fiber $F$ of $g$ is terminal and of $\epsilon$-Fano type of dimension $n-1$ by adjunction formula. It follows that $\Vol(-K_F)\leq M(n-1,\epsilon)$ by the Weak BAB Conjecture  in dimension $n-1$.

 By contrary, assume that $$\Vol(-K_Y)> \frac{2nM(n-1,\epsilon)}{\mu(n-1,\epsilon)}.$$ 
Take a rational number $s$ satisfying  
$$\Vol(-K_Y)>s\cdot n M(n-1,\epsilon)> \frac{2nM(n-1,\epsilon)}{\mu(n-1,\epsilon)}.$$
Here we note that $s>2$ since $\mu(n-1,\epsilon)<1$ (by considering, for example, $(\bP^{n-1}, 0)$ in the Ambro's conjecture).

The following lemma allows us to construct non-klt centers. 
\begin{lem}\label{lemma1}
For a general fiber $F$ of $g$,  $-K_Y-sF$ is $\bQ$-effective.
\end{lem}
\begin{proof}
For a positive integer $p$ and a sufficiently divisible positive integer $m$, we have an exact sequence
$$
0\rightarrow \OO_Y(-mK_Y-pF)\rightarrow \OO_Y(-mK_Y-(p-1)F)\rightarrow \OO_F(-mK_Y-(p-1)F)\rightarrow 0.
$$
Note that  $\OO_F(-mK_Y-(p-1)F)= \OO_F(-mK_F)$. Hence
\begin{align*}
{}&h^0(Y, \OO_Y(-mK_Y-pF))\\
\geq {}&h^0(Y, \OO_Y(-mK_Y-(p-1)F))- h^0(F, \OO_F(-mK_F)).
\end{align*}
Inductively, we have
\begin{align*}
{}& h^0(Y, \OO_Y(-mK_Y-pF))\\
\geq {}& h^0(Y, \OO_Y(-mK_Y))- p\cdot h^0(F, \OO_F(-mK_F)).
\end{align*}
Now we may take the integer $p=sm$ since $m$ is sufficiently divisible. By the definition of volume, we have
\begin{align*}
{}&\lim_{m\rightarrow \infty}\frac{n!}{m^n}\bigg(h^0(Y, \OO_Y(-mK_Y))- sm\cdot h^0(F, \OO_F(-mK_F))\bigg)\\
={}&\Vol(-K_Y)-sn\Vol(-K_F)\\
\geq {}&\Vol(-K_Y)-snM(n-1, \epsilon)>0.
\end{align*}
Hence $h^0(Y, \OO_Y(-mK_Y-smF))>0$ for $m$ sufficiently divisible, that is, $-K_Y-sF$ is $\bQ$-effective. \end{proof}

By Lemma \ref{lemma1}, there is an effective $\bQ$-divisor $B'\sim_\bQ -\frac{1}{s}K_Y-F$.
Now for two general fibers $F_1$ and $F_2$ of $g$, consider the pair
$$\Big(Y, \Big(1-\frac{2}{s}\Big)B+2B'+F_1+F_2\Big).$$ 
By construction, $F_1\cup F_2\subset \Nklt(Y, (1-\frac{2}{s})B+2B'+F_1+F_2)$. Note that 
$$
-\Big(K_Y+\Big(1-\frac{2}{s}\Big)B+2B'+F_1+F_2\Big)\sim_\bQ -\Big(1-\frac{2}{s}\Big)(K_Y+B)
$$
is ample, since $s>2$. By Connectedness Lemma, $\Nklt(Y, (1-\frac{2}{s})B+2B'+F_1+F_2)$ is connected. 
Hence there is a non-klt center $W$ connecting $F_1$ and $F_2$. In particular, $W$ dominates $\bP^1$. Restricting on a general fiber $F$, by adjunction, we have
$(F, B|_F)$ is $\epsilon$-klt log Fano with $F$ terminal and $(F, (1-\frac{2}{s})B|_F+2B'|_F)$ is not klt (see \cite[Lemma 5.17, Lemma 5.50]{KM}) with $B'|_F\sim_\bQ -\frac{1}{s}K_F$. Hence 
$$\frac{2}{s}\geq \ulct(F, B|_F; sB'|_F-B|_F).$$
The generalized Ambro's conjecture comes in to play when trying to bound $s$ from above. By the generalized Ambro's conjecture in dimension $n-1$, 
$$
s\leq \frac{2}{\mu(n-1,\epsilon)},
$$
which contradicts the definition of $s$.
\end{proof}

In particular, by Corollary \ref{2bab} and Theorem \ref{gac2}, the Weak BAB Conjecture   and the generalized Ambro's conjecture hold in dimension $2$,  and hence the following corollary holds.

\begin{cor}\label{cor dp}
Let $X$ be a $3$-fold of $\epsilon$-Fano type with a contraction $f:X\rightarrow \bP^1$ and $X$ having terminal singularities. Then $$\Vol(-K_X)\leq \frac{6M(2,\epsilon)}{\mu(2,\epsilon)}.$$
\end{cor}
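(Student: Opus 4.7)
The statement is an immediate specialization of the preceding theorem to $n=3$, so the plan is essentially to verify that the hypotheses needed by that theorem are all available in dimension three and then read off the stated numerical bound.

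First I would observe that the preceding theorem requires two inputs at level $n-1=2$: Weak BAB Conjecture in dimension two, which is Corollary \ref{2bab} (providing the constant $M(2,\epsilon)$), and generalized Ambro's conjecture in dimension two, which is Theorem \ref{gac2} (providing the constant $\mu(2,\epsilon)$). Both are already established in the paper, so one may apply the theorem without further conditions.

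Next, I would check the remaining hypotheses on $(X,\Delta)$: by assumption, $X$ has terminal singularities, $(X,\Delta)$ is an $\epsilon$-klt log Fano pair of dimension three, and $f:X\to\bP^1$ is a contraction. These are exactly the hypotheses of the preceding theorem applied with $Y=X$, $B=\Delta$, $g=f$ and $n=3$.

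Therefore the preceding theorem yields directly
\[
\Vol(-K_X)\leq \frac{2\cdot 3 \cdot M(2,\epsilon)}{\mu(2,\epsilon)}=\frac{6M(2,\epsilon)}{\mu(2,\epsilon)},
\]
which is the desired bound. There is no real obstacle here since the corollary is a pure substitution of $n=3$ into the already-proved theorem, once the dimension-two inputs have been recorded.
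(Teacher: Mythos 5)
Your proposal is correct and matches the paper's own proof: the corollary is obtained by substituting $n=3$ into the preceding theorem and citing Corollary~\ref{2bab} and Theorem~\ref{gac2} for the dimension-two inputs $M(2,\epsilon)$ and $\mu(2,\epsilon)$.
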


\subsection{Conic bundles}
In this subsection, we treat the case that $X$ has a conic bundle structure $f:X\rightarrow S$. Firstly we collect some facts about singularities of the surface $S$.
\begin{thm}\label{thmS}
Let $(X,\Delta)$ be an $\epsilon$-klt log Fano pair of dimension $3$ and $f:X\rightarrow S$ be a Mori fiber space to a surface $S$, then 
\begin{itemize}
%\item[(i)] $S$ has only  Du Val singularities of type A;
\item[(i)] There exists an effective  $\bQ$-divisor $\Delta_S$ on $S$, such that $(S, \Delta_S)$ is klt log del Pezzo;
%\item[(iii)] $S$ is a Mori dream space;
\item[(ii)] There exists an effective  $\bQ$-divisor $\Delta_S$ on $S$, such that $(S, \Delta_S)$ is $\delta(\epsilon)$-klt and $K_S+\Delta_S\sim_\bQ 0$, where $\delta(\epsilon)$  is a positive real number depending only on $\epsilon$;
\item[(iii)] The family of such $S$ is bounded.% in particular, the Picard number of minimal resolution of $S$ is bounded by  $128/\delta(\epsilon)^5$.
\item[(iv)] There exists a positive  integer $d(\epsilon)$ depending only on $\epsilon$, such that on $S$ there is a very ample divisor $H$ satisfying $H^2\leq d(\epsilon)$. 
\end{itemize}
\end{thm}
\begin{proof}
%(i) is by \cite[(1.2.7) Theorem]{MP}. 
(i) is by \cite[Corollary 3.3]{FG}. %(iii) is by (ii) and  \cite[Corollary 1.3.2]{BCHM}.
(ii) is by \cite[Corollary 1.7]{Birkar} since we may find a boundary $\Delta'\geq \Delta$ such that $(X,\Delta')$ is $\epsilon$-klt and $K_X+\Delta'\sim_\bQ 0$. (iii) is by (ii) and \cite[Theorem 6.8]{AK2}. (iv) is a direct consequence of (iii).
\end{proof}

If taking $H$ generally, we may assume that $G:=f^{-1}(H)$ and $H$ are smooth. Note that  $f|_G: G\to H$ is a conic bundle from a smooth surface to a smooth curve. Note that for a general fiber $F$ of $f$ (or $f|_G$), $G|_G\sim_\bQ d(\epsilon) F$.
\begin{lem}\label{lemma2}$\Vol(-K_X|_G)\leq \frac{8(d(\epsilon)+2)}{\epsilon}$.
\end{lem}
\begin{proof}
Assume by contrary that $\Vol(-K_X|_G)> \frac{8(d(\epsilon)+2)}{\epsilon}$. Choose a rational number $r$ such that
$$\Vol(-K_X|_G)> 4r>\frac{8(d(\epsilon)+2)}{\epsilon}.$$
For a positive integer $p$ and a sufficiently divisible positive integer $m$, for a general fiber $F$ of $f|_G$, we have an exact sequence
$$
0\rightarrow \OO_G(-mK_X|_G-pF)\rightarrow \OO_G(-mK_X|_G-(p-1)F)\rightarrow \OO_F(-mK_F)\rightarrow 0.
$$
 Hence
\begin{align*}
{}&h^0(G, \OO_G(-mK_X|_G-pF))\\
\geq{}& h^0(G, \OO_G(-mK_X|_G-(p-1)F))- h^0(F, -mK_F).
\end{align*}
Inductively, we have
$$
h^0(G, \OO_G(-mK_X|_G-pF))\geq h^0(G, \OO_G(-mK_X|_G))- p\cdot h^0(F, -mK_F).
$$
Now we may take the integer $p=rm$ since $m$ is sufficiently divisible. By the definition of volume, we have
\begin{align*}
{}&\lim_{m\rightarrow \infty}\frac{2}{m^2}\bigg( h^0(G, \OO_G(-mK_X|_G))- rm\cdot h^0(F, -mK_F)\bigg)\\
={}&\Vol(-K_X|_G)-2r\Vol(-K_F)\\
={}&\Vol(-K_X|_G)-4r>0.
\end{align*}
Here note that $F\simeq \bP^1$.
Hence $h^0(G, \OO_G(-mK_X|_G-rmF))>0$ for $m$ sufficiently divisible, that is, $-K_X|_G-rF$ is $\bQ$-effective. Take an effective $\bQ$-divisor $D\sim_\bQ-K_X|_G-rF$ on $G$. For two general fibers $F_1$ and $F_2$ of $f|_G$,  consider the pair 
$$\Big(G, \Big(1-\frac{d(\epsilon)+2}{r}\Big)\Delta|_G+\frac{d(\epsilon)+2}{r}D+F_1+F_2\Big).$$
It is log Fano since 
\begin{align*}
{}&-\Big(K_G+\Big(1-\frac{d(\epsilon)+2}{r}\Big)\Delta|_G+ \frac{d(\epsilon)+2}{r}D+F_1+F_2\Big) \\
\sim_\bQ {}&-\Big(1-\frac{d(\epsilon)+2}{r}\Big) (K_X+\Delta)|_G
\end{align*}
is ample, where $r>d(\epsilon)+2$ by definition. Hence by Connectedness Lemma, 
$$
\Nklt\Big(G, \Big(1-\frac{d(\epsilon)+2}{r}\Big)\Delta|_G+\frac{d(\epsilon)+2}{r}D+F_1+F_2\Big)
$$
is connected. Since it contains $F_1$ and $F_2$, there exists a non-klt center connecting $F_1$ and $F_2$, and hence dominating $H$. Restricting on a general fiber $F$ of $f|_G:G\to H$, by adjunction, 
$$
\Big(F, \Big(1-\frac{d(\epsilon)+2}{r}\Big)\Delta|_F+\frac{d(\epsilon)+2}{r}D|_F\Big)
$$
is not klt. On the other hand, $(F, (1-\frac{d(\epsilon)+2}{r})\Delta|_F)$ is $\epsilon$-klt since $(X, \Delta)$ is  $\epsilon$-klt. Hence
$$
\deg \Big(\frac{d(\epsilon)+2}{r}D|_F\Big)\geq \epsilon.
$$
Note that 
$$
\deg (D|_F)=\deg( -K_X|_F)=2,
$$
this implies that 
$$
 \frac{2(d(\epsilon)+2)}{r}\geq \epsilon,
$$
which contradicts to the definition of $r$.
\end{proof}
Now we can prove the theorem of this subsection.
\begin{thm}\label{thm conic}
Let $X$ be a $3$-fold of $\epsilon$-Fano type with a Mori fiber structure $f:X\rightarrow S$ to a surface. Then $$\Vol(-K_X)\leq\frac{144(d(\epsilon)+2)}{\epsilon^2}.$$
\end{thm}
\begin{proof}Assume to the contrary that $\Vol(-K_X)>\frac{144(d(\epsilon)+2)}{\epsilon^2}.$ Take a rational number $t$ such that
$$
\Vol(-K_X)>t\cdot \frac{24(d(\epsilon)+2)}{\epsilon}>\frac{144(d(\epsilon)+2)}{\epsilon^2}.
$$
\begin{lem}\label{lemma3}
$-K_X-tG$ is $\bQ$-effective.
\end{lem}
\begin{proof}
For a positive integer $p$ and a sufficiently divisible positive integer $m$, we have an exact sequence
$$
0\rightarrow \OO_X(-mK_X-pG)\rightarrow \OO_X(-mK_X-(p-1)G)\rightarrow \OO_G(-mK_X-(p-1)G)\rightarrow 0.
$$
 Hence
\begin{align*}
{}&h^0(X, \OO_X(-mK_X-pG))\\
\geq{}& h^0(X, \OO_X(-mK_X-(p-1)X))- h^0(G, -mK_X|_G-(p-1)G|_G)\\
\geq{}&  h^0(X, \OO_X(-mK_X-(p-1)X))- h^0(G, -mK_X|_G).
\end{align*}
Here we use the fact that $G|_G\geq 0$.
Inductively, we have
$$
h^0(X, \OO_X(-mK_X-pG))\geq h^0(X, \OO_X(-mK_X))- p\cdot h^0(G, -mK_X|_G).
$$
Now we may take the integer $p=tm$ since $m$ is sufficiently divisible. By the definition of volume, we have
\begin{align*}
{}&\lim_{m\rightarrow \infty}\frac{6}{m^3}\bigg(h^0(X, \OO_X(-mK_X))- tm\cdot h^0(G, \OO_F(-mK_X|_G))\bigg)\\
={}&\Vol(-K_X)-3t\Vol(-K_X|_G)\\
\geq{}&\Vol(-K_X)-\frac{24t(d(\epsilon)+2)}{\epsilon}>0.
\end{align*}
Here we use Lemma \ref{lemma2}.
Hence $h^0(X, \OO_X(-mK_X-tmG))>0$ for $m$ sufficiently divisible, that is, $-K_X-tG$ is $\bQ$-effective. 
\end{proof}
By Lemma \ref{lemma3}, there exists an effective $\bQ$-divisor $B\sim_\bQ-K_X- tG$.
%Take $s=\frac{3}{w}<1$.
For a general fiber $F$ of $X$ over $z\in S$, there exists a number $\eta>0$ (cf. \cite[4.8]{SOP}) such that for any general $H'\in |H|$ containing $z$, 
$$\Nklt\Big(X, \Big(1-\frac{3}{t}\Big)\Delta+\frac{3}{t}B\Big)=\Nklt\Big(X, \Big(1-\frac{3}{t}\Big)\Delta+\frac{3}{t}B+\eta f^*(H')\Big).$$
We may take general $H_j\in |H|$ containing $z$ for $1\leq j\leq J$ with $J>\frac{2}{\eta}$  and take $G_1=\sum_{j=1}^J\frac{2}{J}f^*(H_j)$. Then $\mult_{F}G_1\geq 2$ and $G_1\sim_\bQ 2f^*(H)\sim_\bQ 2G$. In particular, $(X, G_1)$ is not klt at $F$ and by construction, in a neighborhood of $F$,
\begin{align*}
{}&\Nklt\Big(X, \Big(1-\frac{3}{t}\Big)\Delta+\frac{3}{t}B\Big)\cup F\\
={}&\Nklt\Big(X, \Big(1-\frac{3}{t}\Big)\Delta+\frac{3}{t}B+G_1\Big).
\end{align*}
Take a general element $G_2\in |f^*(H)|$ not containing $F$, consider the pair $(X, (1-\frac{3}{t})\Delta+\frac{3}{t}B+G_1+G_2)$ where $3/t<1$. 
Then
$$-\Big(K_{X}+ \Big(1-\frac{3}{t}\Big)\Delta+\frac{3}{t}B+G_1+G_2\Big)\sim_\bQ-\Big(1-\frac{3}{t}\Big)(K_X+\Delta)$$
is ample.
Since 
$$F\cup G_2\subset \Nklt\Big(X, \Big(1-\frac{3}{t}\Big)\Delta+\frac{3}{t}B+G_1+G_2\Big),$$
 by Connectedness Lemma,   there is a curve $C$ contained in $\Nklt(X, (1-\frac{3}{t})\Delta+\frac{3}{t}B+G_1+G_2)$, intersecting $F$ and not contracted by $f$. Hence $C$ is  contained in $\Nklt(X, (1-\frac{3}{t})\Delta+\frac{3}{t}B)$ by the construction of $G_1$ and generality of $G_2$. Since $C$ intersects $F$, so does $\Nklt(X, (1-\frac{3}{t})\Delta+\frac{3}{t}B)$. 
 %Since $F$ is  a general fiber over $Z$, $\Nklt(X, (1-\frac{3}{w})B+\frac{3}{w}B')$ dominates $Z$. 
 By inversion of adjunction, 
$(F, (1-\frac{3}{t})\Delta|_F+\frac{3}{t}B|_F)$ is not klt for a general fiber $F$. On the other hand, $(F, \Delta|_F)$ is $\epsilon$-klt and $F\simeq \bP^1$. Hence  
$$
\deg \Big(\frac{3}{t}B|_F\Big)\geq \epsilon.
$$
Note that $$
\deg (B|_F)=\deg( -K_X|_F)=2,
$$
Hence $t\leq \frac{6}{\epsilon}$, which contradicts the definition of $t$.
Hence we proved Theorem \ref{thm conic}
\end{proof}

\end{document}